\newtheorem{thm}{Theorem}[section]
\newtheorem{lem}[thm]{Lemma}
\newtheorem{cor}[thm]{Corollary}
\newtheorem{prop}[thm]{Proposition}
\newtheorem{defn}[thm]{Definition}
\begin{document}
\title{ON FIXING SETS OF COMPOSITION AND CORONA PRODUCTS OF GRAPHS}
\author{I. Javaid*, M. S. Aasi, I. Irshad, M. Salman }
\keywords{ Fixing set; Composition product of graphs; Corona product of graphs\\
\indent 2010 {\it Mathematics Subject Classification.} 05C25, 05C76\\
\indent $^*$ Corresponding author: imran.javaid@bzu.edu.pk}
\address{Centre for advanced studies in Pure and Applied Mathematics,
Bahauddin Zakariya University Multan, Pakistan\newline Email:
imran.javaid@bzu.edu.pk, mshahbazasi@gmail.com,
iqrairshad9344@gmail.com, solo33@gmail.com}
\date{}
\maketitle

\begin{abstract}
A fixing set $\mathcal{F}$ of a graph $G$ is a set of those vertices
of the graph $G$ which when assigned distinct labels removes all the
automorphisms from the graph except the trivial one. The fixing
number of a graph $G$, denoted by $fix(G)$, is the smallest
cardinality of a fixing set of $G$. In this paper, we study the
fixing number of composition product, $G_1[G_2]$ and corona product,
$G_1 \odot G_2$ of two graphs  $G_1$ and $G_2$ with orders $m$ and
$n$ respectively. We show that for a connected graph $G_1$ and an
arbitrary graph $G_2$ having $l\geq 1$ components $G_2^1$, $G_2^2$,
... $G_2^l,$ $mn-1\geq fix(G_1[G_2])\geq m\left(\sum
\limits_{i=1}^{l} fix(G_2^i )\right)$. For a connected graph $G_1$
and an arbitrary graph $G_2$, which are not asymmetric, we prove
that $fix(G_1\odot G_2)=m fix( G_2)$. Further, for an arbitrary
connected graph $G_{1}$ and an arbitrary graph $G_{2}$ we show that
$fix(G_1\odot G_2)= max\{fix(G_1), m fix(G_2)\}$.
\end{abstract}

\section{Introduction}
For readers convenience, we recall some basic definitions. A
\textit{graph} $G$ is an ordered pair $(V(G), E(G))$ consisting of a
set $V(G)$ of \textit{vertices} and a set $E(G)$ of \textit{edges}.
The number of vertices and edges of $G$ are called the
\textit{order} and the \textit{size} of $G$ respectively. If two
vertices $u$ and $v$ are joined by an edge then they are called
\emph{adjacent}, otherwise they are called \emph{non-adjacent}. The
\emph{open neighborhood} of a vertex $u$ is $N (u) = \{v \in V (G)
:$ $v$ is adjacent to $u$ in $G\}$ and the \emph{closed
neighborhood} of $u$ is $N [u] = N (u) \cup\{u\}$. For a subset $U$
of $V (G)$, the set $N_G (U ) = \{v \in V (G) :$ $v$ is adjacent to
some $u \in U \}$ is the open neighborhood of $U$ in $G$. The number
$|N(v)|$ is  called the degree of $v$ in $G$, denoted by $deg_G(v)$.
We simply write $deg(v)$ if the graph $G$ is clear from the context.
We denote the maximum degree of $G$ by $\Delta(G)$.
\par If $S$ is a nonempty set of vertices of a graph $G$ then the
\emph{subgraph of $G$ induced by $S$} is the induced subgraph with
vertex set $S$, where the induced subgraph $H$ of a graph $G$ is the
subgraph of $G$ such that whenever $u$ and $v$ are vertices of $H$
and $uv$ is an edge of $G$ then $uv$ is an edge of $H$ as well.

A \emph{graph homomorphism} is a mapping between vertex sets of two
graphs that respects their structure. More concretely, it maps
adjacent vertices to adjacent vertices. A bijective graph
homomorphism is called \emph{isomorphism} and an isomorphism from a
graph $G$ onto itself is called an \emph{automorphism} of $G$. The
collection of all automorphisms of a graph $G$ forms a group under
the operation of composition of functions, denoted by $\Gamma(G)$.

A set $\mathcal{D}\subseteq V(G)$ is called a \emph{determining set}
if whenever $g,h\in \Gamma(G)$ such that $g(v)=h(v)$ for all $v\in
\mathcal{D}$, then $g(u)=h(u)$ for every $u\in V(G)$, i.e., every
automorphism is uniquely determined by its action on the vertices of
$\mathcal{D}$. The determining number of a graph is the size of a
smallest determining set, denoted by $Det(G)$. Determining sets of
graphs were introduced by Boutin in \cite{DLB}. She gave several
ways of finding and verifying determining sets. The natural lower
bounds on the determining number of some graphs were also given.
Determining sets are frequently used to identify the automorphism
group of a graph. For further work on determining sets and its
relation with other parameters see \cite{DLB,JC}.

\par The orbit of a vertex $w$ is the set $\mathcal{O}(w)\subseteq
V(G)$ such that for every $v\in \mathcal{O}(w)$ there exists an
$\alpha \in \Gamma (G)$ such that $\alpha(w)=v$ and we say that $w$
is $similar$ to $v$, denoted by $w \sim v$. An automorphism $\alpha
\in \Gamma(G)$ is said to \emph{fix} a vertex $w \in V (G)$ if
$\alpha(w) = w$. The \textit{stabilizer} of a vertex $w$ in a graph
$G$ is the set of all automorphisms of $G$ that fixes $w$ and is
denoted by $\Gamma_w(G)$. For $\mathcal{F}\subset V(G)$, an
automorphism $\alpha$ is said to fix the set $\mathcal{F}$ if for
every $w\in \mathcal{F}$, we have $\alpha(w)=w$. The set of
automorphisms that fix $\mathcal{F}$ is a subgroup
$\Gamma_{\mathcal{F}}(G)$ of $\Gamma(G)$ and
$\Gamma_{\mathcal{F}}(G)$ = $\bigcap\limits_{w\in
\mathcal{F}}\Gamma_w (G)$. A set $\mathcal{F}$ is called a fixing
set of $G$ if $\Gamma_{\mathcal{F}}(G)=\{e\}$, where $\{e\}$ is the
identity automorphism. In \cite{FH}, Erwin and Harary introduced an
equivalent concept of the determining number called the \emph{fixing
number} of a graph $G$, $fix(G)$, which is defined as the minimum
cardinality of a fixing set of $G$. A fixing set containing $fix(G)$
number of vertices is called a \emph{minimum fixing set} of $G$. A
vertex $x\in V (G)$ is called a \emph{fixed vertex} if $g(x) = x$
for all $g \in \Gamma(G)$, i.e., $\Gamma_x (G) = \Gamma(G)$. A
vertex $x \in V (G)$ is said to fix a pair $(u, v)$ of similar
vertices, if $h(u)\neq v$ or $h(v) \neq u$ whenever $h\in \Gamma_x
(G)$. For a pair $(u,v)$ of similar vertices $fix(u, v)(= fix(v, u))
= \{x \in V (G) : g(u) \neq v$ and $g(v) \neq u$ for all $g \in
\Gamma_x (G)\}$ is called the fixing set relative to the pair $(u,
v)$ \cite{B}. A graph $G$ is said to be \emph{asymmetric} if
automorphism group $\Gamma(G)$ consists of just the identity
element. All graphs considered in this paper are simple, non-trivial
and have non-trivial automorphism group unless otherwise stated.

Fixing sets of graphs were further studied by Harary and Erwin in
\cite{FD} and it was noted that for any positive integer $n\geq 1$,
$fix(K_n)=n-1$, $fix(P_n)=1$ for $n\geq 2$ and $fix(C_n)=2$ for
$n\geq 3$. The problem of distinguishing vertices of a graph has
been studied using two approaches as mentioned in \cite{FD}. The
first approach involves the concept of metric dimension/location
number, introduced separately by Harary and Melter \cite{t28} and by
Slater \cite{t50}.
The second approach uses the notion of symmetry breaking that was
formalized by Albertson and Collins \cite{MA} and independently by
Harary \cite{FMS,FH}. In this approach, a subset of the vertex set
is colored in such a way that all the automorphisms of the graph
result in identity automorphism. This approach leads to the idea of
fixing sets of graphs. In \cite{FD}, Harary and Erwin gave upper
bounds for the fixing number of a graph in terms of the number of
orbits under the action of $\Gamma(G)$ and in terms of the order of
$\Gamma(G)$. The notion of fixing set has its application to the
problem of programming a robot to manipulate objects \cite{KL}.


Metric dimension of composition product (also called lexicographic
product) and corona product of graphs was studied in \cite{BM,SB}
and in \cite{JA}, respectively. Motivated by the close relationship
between the metric dimension and the fixing number of graphs(given
in \cite{FD}), in this paper, we consider the composition product
and the corona product of graphs in the context of fixing number.

 This paper consists of three sections including the introduction.
 In section 2 and 3, we give several results related to fixing sets
 and the fixing number of composition and corona product of graphs.
\section{Composition Product}
The \textit{composition product} of two graphs $G_1$ and $G_2$,
denoted by $G_1[G_2]$, is the graph with vertex set $V(G_1)\times
V(G_2)$ = $\{(a,v)\ |\ a\in V(G_1)$ and $v\in V(G_2)\}$,  where
$(a,v)$ is adjacent to $(b,w)$ whenever $ab\in E(G_1)$, or $a=b$ and
$vw\in E(G_2)$ \cite{C}. For any vertex $a\in V(G_1)$ and $b\in
V(G_2)$, we define the vertex set $G_2(a)$ = $\{(a,v)\in
V(G_1[G_2])\ |\ v\in V(G_2)\}$ and $G_1(b)$ = $\{(v,b)\in
V(G_1[G_2])\ |\ v\in V(G_1)\}$. Let $G_1$ and $G_2$ be two
non-trivial graphs containing $k\geq 1$ components $G_1^1$,
$G_1^2$,\ldots,$G_1^k$ and $l\geq 1$ components $G_2^1$,
$G_2^2$,\ldots,$G_2^l$ respectively with $|V(G_2^j)|\geq 2$ for each
$j=1,2,\ldots,l$. For $a^i\in V(G_1^i)$ and $1\leq i \leq k$, we
define the vertex set $G_2^j(a^i)$ = $\{(a^i,v)|v\in V(G_2^j)\}$.
From the definition of $G_1[G_2]$, it is clear that for every $(a,
v)\in V(G_1[G_2])$, $deg(a, v)= deg_{G_1}(a)\cdot |V(G_2)|+
deg_{G_2}(v)$ in $G_1[G_2]$. If $G_1$ is a disconnected graph having
$k\geq 2$ components $G_1^1$, $G_1^2$, \ldots, $G_1^k$, then
$G_1[G_2]$ is also a disconnected graph having $k$ components such
that $G_1[G_2]=G_1^1[G_2]\cup G_1^2[G_2]\cup \ldots \cup G_1^k[G_2]$
and each component $G_1^i[G_2]$ is the composition product of
connected component $G_1^i$ of $G_1$ with $G_2$, therefore
throughout the paper, we will assume $G_1$ to be connected. Using a
result on the fixing number of a disconnected graph given in
\cite{az} and the fixing number of components of $G_1[G_2]$, we give
a general formula for fixing number of $G_1[G_2]$. Some useful
results related to the structure and distance properties of
composition product of two graphs are stated here:

\begin{thm}\cite{az}
Let $G$ be a graph having $k\geq 2$ components $G_1$, $G_2$, ...,
$G_k$ with $|G_j|\geq 2$ for all $1\leq j\leq k$. Let $H$ be the
subgraph of $G$ having those components of $G$ which are not
asymmetric, and $H_1,H_2,\ldots,H_l$ are the subgraphs of $G$ having
$m_1,m_2,\ldots,m_l$ asymmetric components of $G$ such that for
$G_i,G_j\in H_r$, $r=1,2,\ldots,l$, $G_i\cong G_j$, then
\[fix(G)=\sum \limits_{G_i\in H} fix(G_i)+\sum\limits_{i=1}^{l}m_i
-l.\]
\end{thm}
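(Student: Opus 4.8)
The plan is to prove the identity by exhibiting a fixing set of the stated size (the upper bound) and then showing that no smaller fixing set can exist (the lower bound). The starting point is the structure of $\Gamma(G)$ for a disconnected $G$: every automorphism permutes the components among themselves, carrying each component isomorphically onto a component isomorphic to it. Thus $\Gamma(G)$ is generated by the internal automorphisms acting inside single components together with the isomorphisms that interchange pairs of isomorphic components. Accordingly I would split $V(G)$ into the vertices lying in the non-asymmetric components (those forming $H$) and the vertices lying in the asymmetric components (those forming the classes $H_1,\dots,H_l$, where $H_r$ consists of $m_r$ mutually isomorphic asymmetric copies).

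For the upper bound I would build an explicit fixing set $\mathcal{F}$. In each non-asymmetric component $G_i\in H$ place a minimum fixing set of $G_i$, contributing $fix(G_i)$ vertices; since $G_i$ is not asymmetric, $fix(G_i)\geq 1$, so each such component receives at least one labelled vertex. In each class $H_r$ label a single vertex in exactly $m_r-1$ of the $m_r$ copies. This yields $|\mathcal{F}|=\sum_{G_i\in H}fix(G_i)+\sum_{r=1}^{l}(m_r-1)$, which equals the right-hand side. To verify $\mathcal{F}$ is a fixing set, let $\alpha\in\Gamma_{\mathcal{F}}(G)$. Each labelled vertex is fixed by $\alpha$, so any component carrying a label is mapped to itself and no two labelled components can be interchanged. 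In each class $H_r$ the single unlabelled copy then has no isomorphic partner left to receive it, so it too is fixed setwise; being asymmetric it is therefore fixed pointwise. Each non-asymmetric component is fixed setwise and, as it contains a minimum fixing set, is fixed pointwise as well. Hence $\alpha=e$.

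For the lower bound I would argue region by region, using that the two vertex-regions are disjoint so their contributions add. If some non-asymmetric $G_i\in H$ received fewer than $fix(G_i)$ labels, then by definition of the fixing number there is a non-trivial automorphism of $G_i$ fixing those labels; extending it by the identity on every other component produces a non-trivial element of $\Gamma_{\mathcal{F}}(G)$, a contradiction. Likewise, if in some class $H_r$ fewer than $m_r-1$ copies carried a label, then at least two copies would be entirely unlabelled, and the isomorphism interchanging those two copies while acting as the identity elsewhere is a non-trivial automorphism fixing every label, again a contradiction. Summing these necessary conditions gives $fix(G)\geq\sum_{G_i\in H}fix(G_i)+\sum_{r=1}^{l}(m_r-1)$, matching the upper bound.

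The step demanding the most care is the interplay between internal automorphisms and component interchanges, and in particular why the count does not double or undercount. The two points that must be made watertight are that a single label already pins an asymmetric copy completely, because there setwise fixing coincides with pointwise fixing, so swaps among asymmetric copies are broken \emph{for free} by the same labels that fix them; and that a non-asymmetric component automatically resists being swapped with an isomorphic partner, since the $fix(G_i)\geq 1$ labels it already carries force it to be fixed setwise. Confirming these, together with the claim that the lone unlabelled copy in each class is genuinely forced, is where the argument needs to be airtight.
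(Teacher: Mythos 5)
The paper does not actually prove this statement: it is Theorem 2.1, imported from reference \cite{az} (Greenfield's thesis) and used as a black box, so there is no in-paper proof to compare yours against. Judged on its own merits, your argument is correct and complete, and it is the standard way to establish such a formula: an explicit fixing set (minimum fixing sets inside the non-asymmetric components, plus one vertex in all but one copy of each isomorphism class of asymmetric components) gives the upper bound, and the lower bound follows because an under-labelled non-asymmetric component admits a non-trivial automorphism extendable by the identity, while two unlabelled isomorphic asymmetric copies admit a swap fixing everything else; disjointness of the two vertex regions lets the bounds add. The one point you should state explicitly rather than leave implicit is that the classes $H_1,\ldots,H_l$ are \emph{maximal} isomorphism classes, i.e.\ asymmetric components lying in different classes are non-isomorphic: your claim that the lone unlabelled copy in $H_r$ ``has no isomorphic partner left to receive it'' silently uses this, and indeed the theorem's formula is false without it (two isomorphic asymmetric copies split into two different classes would contribute $0$ to the right-hand side, yet one label is needed to break their swap). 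This is really a gap in the statement as quoted rather than in your proof, but your write-up should flag the assumption where it is used.
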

Let G be a connected graph, then the distance between two vertices
$u$ and $v$ in $G$, $d_G(u,v)$, is the length of a shortest $u-v$
path in G.
\begin{prop}\cite{WI}
Suppose $(a, b)$ and $(a' , b')$ are two vertices of $G_1[G_2]$.
Then
$$d_{G_1[G_2]} ((a, b), (a' , b'))=\left\{\begin{array}{ll}
       d_{G_1}(a,a') & \,\,\,\,\,\,\, \mbox{if}\,\ a\neq a',\\
       d_{G_2}(b,b') & \,\,\,\,\,\,\, \mbox{if}\,\ a=a' $and$\,\ deg_{G_1}(a)=0,\\
       min\{d_{G_2}(b,b'),2\} & \,\,\,\,\,\,\, \mbox{if}\,\ a=a' $and$\,\ deg_{G_1}(a)\neq 0.
            \end{array}
             \right.
 $$
\end{prop}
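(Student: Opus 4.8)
The plan is to prove the formula by a case analysis matching the three cases in the statement, establishing in each a lower bound via a projection argument and a matching upper bound by exhibiting an explicit path. The crucial structural observation is that the adjacency rule makes each edge $ab\in E(G_1)$ behave like a complete join between the fibers $G_2(a)$ and $G_2(b)$: whenever $ab\in E(G_1)$, the vertex $(a,x)$ is adjacent to $(b,y)$ for \emph{every} choice of $x,y\in V(G_2)$, while within a single fiber $G_2(a)$ the induced structure is just a copy of $G_2$. Every case below is obtained by unwinding this definition.

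First I would handle the case $a\neq a'$. For the lower bound, note that any walk in $G_1[G_2]$ projects, under the first coordinate, to a walk in $G_1$: along each edge the first coordinate either stays fixed (a $G_2$-edge) or moves to an adjacent vertex of $G_1$ (a $G_1$-edge). Hence a path from $(a,b)$ to $(a',b')$ yields a walk from $a$ to $a'$ in $G_1$, so its length is at least $d_{G_1}(a,a')$. For the matching upper bound I would lift a geodesic $a=a_0,a_1,\ldots,a_k=a'$ of $G_1$ to the path $(a_0,b),(a_1,b),\ldots,(a_{k-1},b),(a_k,b')$, which is valid precisely because consecutive first coordinates are adjacent in $G_1$ and hence the second coordinate is unconstrained; the final step delivers the required second coordinate $b'$. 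This gives $d_{G_1[G_2]}((a,b),(a',b'))=d_{G_1}(a,a')$.

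Next I would treat $a=a'$. If $deg_{G_1}(a)=0$, then $a$ has no $G_1$-neighbor, so no edge leaves the fiber $G_2(a)$; the fiber is an isolated copy of $G_2$, and the distance equals exactly $d_{G_2}(b,b')$ (infinite when $b,b'$ lie in different components of $G_2$, consistent with the convention). If $deg_{G_1}(a)\neq 0$, pick a $G_1$-neighbor $c$ of $a$. When $b=b'$ the distance is $0$ and when $bb'\in E(G_2)$ it is $1$; otherwise $(a,b)$ and $(a,b')$ are distinct and non-adjacent, forcing distance $\geq 2$, while the detour $(a,b),(c,b),(a,b')$ through the neighboring fiber realizes distance $\leq 2$. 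Hence the distance equals $\min\{d_{G_2}(b,b'),2\}$.

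The only genuinely delicate points are the two lower bounds. The projection argument for $a\neq a'$ must be stated cleanly enough that it plainly forbids reaching $a'$ in fewer than $d_{G_1}(a,a')$ steps, and in the final case one must verify that the length-$2$ shortcut through an adjacent fiber is always available whenever $deg_{G_1}(a)\neq 0$, which is exactly where the degree hypothesis is used. Everything else reduces to the adjacency definition of $G_1[G_2]$.
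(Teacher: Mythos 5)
Your proof is correct in all three cases: the projection of any path to its first coordinate gives a walk in $G_1$ (yielding the lower bound when $a\neq a'$), the lifted geodesic $(a_0,b),(a_1,b),\ldots,(a_{k-1},b),(a_k,b')$ is a valid path because adjacency in $G_1$ places no constraint on the second coordinates, the isolated-fiber argument when $deg_{G_1}(a)=0$ is airtight, and the detour $(a,b),(c,b),(a,b')$ through a neighboring fiber correctly uses the hypothesis $deg_{G_1}(a)\neq 0$. Note, however, that the paper itself offers no proof of this proposition to compare against: it is quoted verbatim as a known result from the cited handbook of Imrich and Hammack \cite{WI}. Your argument is essentially the standard one given there (and in Imrich--Klav\v{z}ar \cite{C}) for distances in the lexicographic product, so while there is no in-paper proof to measure against, your proposal is a complete and self-contained justification of the cited formula.
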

\begin{thm}\label{thm01}\cite{WI}
Let $G_1[G_2]$ be the composition product of two graphs $G_1$ and
$G_2$. For every $\alpha \in \Gamma(G_1)$ and $\beta \in
\Gamma(G_2)$, there exist automorphisms $f_\alpha$ and $f_\beta$ on
$G_1[G_2]$ given by
 \begin{center}
 $f_{\alpha}(x,i)=(\alpha(x),i)$, $\forall$ $\alpha \in \Gamma
 (G_1)$,
 \\
  $f_{\beta}(x,i)=(x,\beta(i))$, $\forall$ $\beta \in \Gamma (G_2)$.
\end{center}
\end{thm}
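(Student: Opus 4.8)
The plan is to verify directly that each of the two prescribed maps is a bijection of $V(G_1[G_2])$ that preserves the adjacency relation; since a bijection preserving adjacency necessarily preserves non-adjacency as well, establishing the adjacency equivalence in both directions will suffice to conclude that each map lies in $\Gamma(G_1[G_2])$. Both maps are bijections immediately: $f_\alpha$ applies the bijection $\alpha$ to the first coordinate and leaves the second fixed, while $f_\beta$ leaves the first coordinate fixed and applies the bijection $\beta$ to the second, so each has an obvious inverse, namely $f_{\alpha^{-1}}$ and $f_{\beta^{-1}}$ respectively. It therefore remains only to check edge-preservation.

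For $f_\alpha$, I would take two vertices $(x,i)$ and $(y,j)$ and unfold the defining adjacency rule: they are adjacent in $G_1[G_2]$ if and only if either $xy\in E(G_1)$, or $x=y$ and $ij\in E(G_2)$. Applying $f_\alpha$ sends these to $(\alpha(x),i)$ and $(\alpha(y),j)$, which are adjacent if and only if either $\alpha(x)\alpha(y)\in E(G_1)$, or $\alpha(x)=\alpha(y)$ and $ij\in E(G_2)$. Since $\alpha\in\Gamma(G_1)$, we have $xy\in E(G_1)$ exactly when $\alpha(x)\alpha(y)\in E(G_1)$, and since $\alpha$ is injective, $x=y$ exactly when $\alpha(x)=\alpha(y)$; the second coordinates $i,j$ are untouched, so the clause $ij\in E(G_2)$ is unchanged. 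Matching the two clauses term by term shows the two adjacency conditions are equivalent, so $f_\alpha$ preserves adjacency.

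For $f_\beta$, the argument is symmetric in the roles of the coordinates. The images $(x,\beta(i))$ and $(y,\beta(j))$ are adjacent if and only if either $xy\in E(G_1)$, or $x=y$ and $\beta(i)\beta(j)\in E(G_2)$. Here the first-coordinate data $x,y$ are left fixed, so both $xy\in E(G_1)$ and $x=y$ are unaffected, while $\beta\in\Gamma(G_2)$ gives $ij\in E(G_2)$ precisely when $\beta(i)\beta(j)\in E(G_2)$. Again the two adjacency conditions coincide, so $f_\beta$ preserves adjacency and hence is an automorphism.

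There is no real obstacle here; the result is essentially a bookkeeping check. The only point that requires a moment's care is the handling of the disjunctive adjacency rule: one must track the two clauses ($xy\in E(G_1)$ versus $x=y$ together with $ij\in E(G_2)$) separately and, for $f_\alpha$, invoke both that $\alpha$ is edge-preserving \emph{and} that $\alpha$ is injective, rather than edge-preservation alone, so that the equality clause transfers correctly.
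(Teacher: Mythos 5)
Your proof is correct. Note that the paper does not prove this statement at all: it is quoted with a citation to the Handbook of Product Graphs of Imrich and Hammack, so there is no in-paper argument to compare against. Your direct verification---bijectivity via the explicit inverses $f_{\alpha^{-1}}$, $f_{\beta^{-1}}$, plus the clause-by-clause check that the disjunctive adjacency rule of $G_1[G_2]$ is preserved in both directions---is exactly the standard argument one would find in the cited source, and your observation that the equality clause $x=y$ requires the injectivity of $\alpha$ (not merely its edge-preservation) is the one genuinely non-mechanical point in the check; you handled it correctly.
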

\begin{prop}\label{pro1.4}\cite{SB}
Let $G_1[G_2]$ be the composition product of two graphs $G_1$ and
$G_2$. For two different vertices $a$ and $b$ of $G_1$, every two
different vertices $u,v\in G_2(a)$ satisfy
$d_{G_{1}[G_{2}]}(u,z)=d_{G_{1}[G_{2}]}(v,z)$ whenever $z\in
G_2(b)$.
\end{prop}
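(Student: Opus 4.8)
The plan is to reduce the claimed equality to a statement about the first coordinate only. Write $u=(a,x)$ and $v=(a,y)$ with $x\neq y$ in $V(G_2)$, and $z=(b,w)$ with $a\neq b$. I want to show that both $d_{G_1[G_2]}(u,z)$ and $d_{G_1[G_2]}(v,z)$ equal $d_{G_1}(a,b)$; since this common value does not involve the $G_2$-coordinate of the first argument, the two distances must coincide.

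The fastest route is to invoke the preceding distance proposition directly. Because $a\neq b$, its first case gives $d_{G_1[G_2]}((a,x),(b,w))=d_{G_1}(a,b)$ for every choice of $x$ and $w$. Applying this once with $u=(a,x)$ and once with $v=(a,y)$ immediately yields $d_{G_1[G_2]}(u,z)=d_{G_1}(a,b)=d_{G_1[G_2]}(v,z)$, which is exactly the assertion.

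If instead a self-contained argument is preferred, I would establish $d_{G_1[G_2]}((a,x),(b,w))=d_{G_1}(a,b)$ by two inequalities. For the lower bound, consider the projection $\pi(c,t)=c$ onto $V(G_1)$: whenever $(c,t)$ and $(c',t')$ are adjacent in $G_1[G_2]$ one has either $cc'\in E(G_1)$ or $c=c'$, so $\pi$ carries any path from $(a,x)$ to $(b,w)$ to a walk from $a$ to $b$ in $G_1$, whence that path has length at least $d_{G_1}(a,b)$. For the upper bound, take a geodesic $a=c_0,c_1,\dots,c_k=b$ in $G_1$ with $k=d_{G_1}(a,b)$ and lift it to $(a,x)=(c_0,x),(c_1,w),(c_2,w),\dots,(c_k,w)=(b,w)$; consecutive vertices are adjacent since each $c_ic_{i+1}\in E(G_1)$, independently of the second coordinates, so this is a walk of length $k$. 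Combining the bounds gives the value $d_{G_1}(a,b)$, independent of $x$ and $w$, and the proposition follows.

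The argument carries essentially no obstacle, since the result is almost immediate from the distance formula. The only point needing care is the lower bound, where one must observe that a ``vertical'' step taken inside a single column projects to a repeated vertex in $G_1$ and therefore only lengthens the image walk; such steps can never push the $G_1$-length below $d_{G_1}(a,b)$, so the projection inequality is safe.
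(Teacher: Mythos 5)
Your proposal is correct. One point of comparison worth noting: the paper does not prove this proposition at all --- it is imported verbatim from the reference [SB] (Saputro et al.), so there is no in-paper argument to measure yours against. Your first route, deducing the claim from the distance formula stated immediately before it (the case $a\neq a'$ of Proposition 2.2, which gives $d_{G_1[G_2]}((a,x),(b,w))=d_{G_1}(a,b)$ independently of the second coordinates), is the shortest legitimate derivation and uses only material the paper itself records. Your self-contained argument is also sound and has the added value of making the result independent of the cited formula: the projection bound correctly handles ``vertical'' steps inside a column $G_2(c)$ (they project to repeated vertices and can only lengthen the image walk, never shorten it below $d_{G_1}(a,b)$), and the lift of a $G_1$-geodesic works because adjacency across distinct columns in $G_1[G_2]$ ignores the second coordinate entirely. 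Either version proves the stronger statement that the distance from any vertex of $G_2(a)$ to any vertex of $G_2(b)$ is the constant $d_{G_1}(a,b)$, which immediately implies the proposition as stated.
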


\begin{prop}\label{pro1.5}\cite{SB}
Let $G_1[G_2]$ be the composition product of two graphs $G_1$ and
$G_2$ with $G_2$ having $l\geq 1$ components $G_2^1, G_2^2,\ldots,
G_2^l$.
 For $a\in V(G_1)$ and $i, j\in \{1,2,...,l\}$ with $i\neq
j$, every two different vertices $x, y\in G_2^i(a)$ satisfy
$d_{G_{1}[G_{2}]}(x, z) = d_{G_{1}[G_{2}]}(y, z)$ whenever $z\in
G_2^j(a)$.
\end{prop}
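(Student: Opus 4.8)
The plan is to reduce the entire claim to the distance formula for $d_{G_1[G_2]}$ stated above (the Proposition cited to \cite{WI}). The crucial feature of the configuration is that all three vertices share the same first coordinate: writing $x=(a,u)$, $y=(a,v)$ with distinct $u,v\in V(G_2^i)$, and $z=(a,w)$ with $w\in V(G_2^j)$, the pairs $(x,z)$ and $(y,z)$ each fall into the $a=a'$ branch of that formula. Hence only the two subcases distinguished by whether $deg_{G_1}(a)$ vanishes can occur, and in both of them the $G_1[G_2]$-distance is determined solely by the $G_2$-distance of the second coordinates.

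The key observation I would isolate is that $i\neq j$ forces $u$ and $w$ to lie in different components $G_2^i$ and $G_2^j$ of $G_2$; there is therefore no $u$--$w$ path inside $G_2$, so $d_{G_2}(u,w)=\infty$, and for the same reason $d_{G_2}(v,w)=\infty$. In other words, the two second-coordinate distances that the formula reads off are \emph{both} infinite, independently of the particular choice of $u$ and $v$ within $G_2^i(a)$.

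It then remains to feed this into the formula and split on $deg_{G_1}(a)$. When $deg_{G_1}(a)\neq 0$ we obtain $d_{G_1[G_2]}(x,z)=\min\{d_{G_2}(u,w),2\}=2$ and likewise $d_{G_1[G_2]}(y,z)=2$ (a length-$2$ path through any $G_1$-neighbor of $a$ realizes this, while $x,z$ are non-adjacent since $uw\notin E(G_2)$); when $deg_{G_1}(a)=0$ we instead get $d_{G_1[G_2]}(x,z)=d_{G_2}(u,w)=\infty=d_{G_2}(v,w)=d_{G_1[G_2]}(y,z)$, because an isolated $a$ leaves $G_2^i(a)$ and $G_2^j(a)$ in separate components of $G_1[G_2]$. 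Either way the two distances agree, which is the claim. There is no genuine obstacle here: the argument is essentially a one-line application of the distance formula, and the only points demanding care are the convention $d_{G_2}=\infty$ for vertices in distinct components and the check that both degree cases output equal values. In fact, under the paper's standing assumption that $G_1$ is connected we always have $deg_{G_1}(a)\geq 1$, so only the case giving distance $2$ survives and the proof collapses to the single evaluation $\min\{\infty,2\}=2$.
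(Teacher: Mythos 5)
Your proposal is correct, but note that the paper itself offers no proof of this statement at all: it is quoted verbatim from the reference \cite{SB} (Saputro et al.) as a known result, so there is no internal argument to compare yours against. What you have done is supply the missing derivation, and it is the natural one: reduce to the distance formula of Proposition 2.2 (cited from \cite{WI}), observe that all three vertices share the first coordinate $a$, and use the fact that $u\in V(G_2^i)$ and $w\in V(G_2^j)$ with $i\neq j$ forces $d_{G_2}(u,w)=d_{G_2}(v,w)=\infty$, so that both branches of the formula ($deg_{G_1}(a)=0$ giving $\infty=\infty$, and $deg_{G_1}(a)\neq 0$ giving $\min\{\infty,2\}=2$ on both sides) return equal values. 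Your case analysis is complete, and you are right to flag the two delicate conventions — that $d_{G_2}$ is taken to be $\infty$ across components, and that the \cite{WI} formula must be read as valid under that convention (it is: when $deg_{G_1}(a)\neq 0$ a path of length $2$ through $(a',c)$ for any $G_1$-neighbor $a'$ of $a$ exists, and $x,z$ are non-adjacent since $uw\notin E(G_2)$). Your closing remark that the paper's standing hypotheses ($G_1$ connected and non-trivial) collapse everything to the single evaluation $\min\{\infty,2\}=2$ is also accurate, though keeping both degree cases, as you do, is what proves the proposition in the generality in which it is stated.
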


Let $G$ be a graph. The number $e(v) = \max \limits_{u\in V(G)}
d_G(u,v)$ denotes the {\it eccentricity} of $v$. For $1\leq i\leq
e(v)$, the $i$th distance neighborhood of $v$ is $N_i(v) = \{u\in
V(G) | d_G(u,v) = i\}$.

\par From Theorem 2.3, observe that to study the automorphisms of $G_1[G_2]$, we
must keep in mind the automorphisms of both $G_1$ and $G_2$. We note
that for any two graphs $G_1$ and $G_2$ if $a\in \mathcal{O}(b)$ for
two distinct vertices $a, b\in V(G_1)$ then $(a, i)\in
\mathcal{O}(b, i)$ for all $i\in V(G_2)$, where $(a, i),(b, i)\in
V(G_1[G_2])$. As we know that for $a\in \mathcal{O}(b)$,
$deg_{G_1}(a)=deg_{G_1}(b)$ and for any $(a, i)\in V(G_1[G_2])$,
$deg(a, i) = deg_{G_1}(a)\cdot |V(G_2)|+ deg_{G_2}(i)$. So, $deg(a,
i) = deg(b, i)$ in $G_1[G_2]$ for all $i\in V(G_2)$. Suppose $(a,
i)\not\in \mathcal{O}(b, i)$, then there exists a vertex $(c, i)$ in
$N_{s}(a,i)$, where $1\leq s \leq e(a)$, and a vertex $(d, i)\in
N_{s}(b,i)$, where $1\leq s \leq e(b)$, such that $deg(c, i)\neq
deg(d, i)$ in $G_1[G_2]$. But then $deg(c)\neq deg(d)$, where $c\in
N_s(a)$ and $d\in N_s(b)$, which contradicts the fact that $a\in
\mathcal{O}(b)$. Hence $(a, i)\in \mathcal{O}(b, i)$ for all $i\in
V(G_2)$. Similarly, for two distinct vertices $i,j\in V(G_2)$ if
$j\in \mathcal{O}(i)$ so, $deg(j) = deg(i)$ in $G_2$. Then for any
$a\in V(G_1)$, $deg(a,i) = deg(a, j)$. Also, $(a, k)$ is adjacent to
$(a, i)$ in $G_1[G_2]$ if and only if $k$ is adjacent to $i$ in
$G_2$. So, the degree sequence of neighborhood of $(a, i)$ must be
same as of $(a, j)$. For otherwise, $j\notin \mathcal{O}(i)$ and
hence we conclude that $(a, j)\in \mathcal{O}(a, i)$. Hence, we have
the following result:

\begin{lem}\label{lem2}
The following assertions hold in the composition product,
$G_1[G_2]$, of two graphs $G_1$ and $G_2$.
\\
(i) If $a\in \mathcal{O}(b)$ in $G_1$, then $(a, i)\in
\mathcal{O}(b, i)$ in $G_1[G_2]$ for all $i\in V(G_2)$.
\\
(ii) If $j\in \mathcal{O}(i)$ in $G_2$, then $(a, j)\in
\mathcal{O}(a, i)$ in $G_1[G_2]$ for all $a\in V(G_1)$.
\end{lem}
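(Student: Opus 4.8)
The plan is to read off both assertions directly from the automorphism-lifting of Theorem \ref{thm01}, rather than routing through degree sequences. Recall that, by definition, a vertex lies in the orbit $\mathcal{O}(w)$ precisely when some automorphism of the ambient graph carries $w$ to it. Hence, to establish that $(a,i)\in\mathcal{O}(b,i)$ it suffices to exhibit a single automorphism of $G_1[G_2]$ sending $(b,i)$ to $(a,i)$, and likewise for part (ii). This reduces both parts to producing the right witnessing map.

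For part (i), I would start from the hypothesis $a\in\mathcal{O}(b)$ in $G_1$, which supplies an $\alpha\in\Gamma(G_1)$ with $\alpha(b)=a$. Theorem \ref{thm01} lifts $\alpha$ to $f_\alpha\in\Gamma(G_1[G_2])$ acting by $f_\alpha(x,i)=(\alpha(x),i)$. Evaluating at $(b,i)$ gives $f_\alpha(b,i)=(\alpha(b),i)=(a,i)$, so $(a,i)\sim(b,i)$. The structural point that delivers the conclusion uniformly in $i$ is that $f_\alpha$ fixes every second coordinate: the one map $f_\alpha$ simultaneously witnesses $(a,i)\in\mathcal{O}(b,i)$ for all $i\in V(G_2)$.

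Part (ii) is entirely symmetric, using the second factor. From $j\in\mathcal{O}(i)$ in $G_2$ I obtain $\beta\in\Gamma(G_2)$ with $\beta(i)=j$, and Theorem \ref{thm01} lifts $\beta$ to $f_\beta\in\Gamma(G_1[G_2])$ with $f_\beta(x,i)=(x,\beta(i))$. Then $f_\beta(a,i)=(a,\beta(i))=(a,j)$, giving $(a,j)\in\mathcal{O}(a,i)$ for every $a\in V(G_1)$, since now it is the first coordinate that is left untouched.

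I do not expect a genuine obstacle: once Theorem \ref{thm01} is in hand, each part is a one-line verification, and the only thing to watch is confirming that the lifted map fixes exactly the coordinate the hypothesis does not move, which is immediate from the explicit formulas. The main ``difficulty'' is therefore conceptual rather than technical, namely recognizing that the forward construction sidesteps the degree analysis. The alternative route suggested by the degree identity $deg(a,i)=deg_{G_1}(a)\cdot|V(G_2)|+deg_{G_2}(i)$ and the propagation of degree equalities through the distance neighborhoods $N_s$ does point toward the same conclusion, but it establishes only a \emph{necessary} symmetry condition on degrees; I would prefer the direct construction above, as it exhibits the witnessing automorphism and so is both shorter and self-contained.
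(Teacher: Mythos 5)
Your proposal is correct, and it takes a genuinely different route from the paper. The paper proves this lemma (in the paragraph preceding its statement) by a contrapositive degree argument: assuming $(a,i)\notin\mathcal{O}(b,i)$, it asserts there must exist vertices $(c,i)\in N_s(a,i)$ and $(d,i)\in N_s(b,i)$ with $deg(c,i)\neq deg(d,i)$, and then derives $deg(c)\neq deg(d)$ to contradict $a\in\mathcal{O}(b)$; part (ii) is handled by a similar comparison of neighborhood degree sequences. You instead construct the witnessing automorphism directly: from $\alpha(b)=a$ the lifted map $f_\alpha$ of Theorem \ref{thm01} sends $(b,i)$ to $(a,i)$ while fixing every second coordinate, and symmetrically $f_\beta$ handles part (ii). Your approach buys more than brevity: the paper's argument is actually logically incomplete, since two vertices can fail to be in the same orbit even when all their distance-neighborhood degree sequences agree (degree agreement is, as you observe, only a necessary condition for similarity, and refuting orbit membership does not by itself produce a degree discrepancy). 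Your forward construction closes that gap by exhibiting the automorphism explicitly, and it is the argument the paper arguably should have given, since Theorem \ref{thm01} is already quoted there for exactly this purpose. The only caution is implicit and you handle it correctly: the orbit relation is symmetric because $\Gamma(G_1[G_2])$ contains inverses, so producing a map carrying $(b,i)$ to $(a,i)$ rather than the reverse is immaterial.
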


\begin{lem}\label{lem1}\cite{B}
Let $G$ be a graph. If there exists an automorphism $\alpha$ of $G$
such that $\alpha(u)=v$, where $u, v \in V(G)$ with $ u\neq v$, and
$d_G(u,x)=d_G(v,x)$ for some $x\in V(G)$, then $x\notin fix(u,v)$.
\end{lem}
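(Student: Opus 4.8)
The plan is to unwind the definition at the outset: by the description of the fixing set relative to a pair, the statement $x\notin fix(u,v)$ means exactly that there exists at least one automorphism $g\in\Gamma_x(G)$ with $g(u)=v$ or $g(v)=u$. So the whole problem reduces to producing a single automorphism that fixes $x$ and yet carries $u$ onto $v$ (or $v$ onto $u$). The hypothesis already supplies an automorphism $\alpha$ with $\alpha(u)=v$; the only thing it may fail to do is fix $x$. Accordingly, I would first dispose of the easy case $\alpha(x)=x$: here $\alpha\in\Gamma_x(G)$ and $\alpha(u)=v$, so $x\notin fix(u,v)$ at once. The entire difficulty is therefore concentrated in the case $\alpha(x)\neq x$, and it is precisely here that the distance hypothesis $d_G(u,x)=d_G(v,x)$ must be used.

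The reason equidistance is the right hypothesis is that it is forced: if some $g\in\Gamma_x(G)$ had $g(u)=v$, then, $g$ being an isometry fixing $x$, one would get $d_G(v,x)=d_G(g(u),g(x))=d_G(u,x)$. Thus equal distances are necessary, and the lemma asserts that for similar $u,v$ they are also sufficient. My strategy for the hard case is to \emph{correct} the given $\alpha$. Writing $x'=\alpha(x)$, I would look for an automorphism $\sigma\in\Gamma_v(G)$ with $\sigma(x')=x$; then $g=\sigma\circ\alpha$ satisfies $g(u)=\sigma(v)=v$ and $g(x)=\sigma(x')=x$, so $g\in\Gamma_x(G)$ realizes the required map and finishes the proof. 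The search for $\sigma$ is not blind: applying the isometry $\alpha$ to the equal distances gives $d_G(v,x')=d_G(\alpha(u),\alpha(x))=d_G(u,x)=d_G(v,x)$, so $x$ and $x'$ lie in the \emph{same} distance-neighborhood $N_i(v)$ of $v$, with $i=d_G(v,x)$. This is the structural coincidence that such a $\sigma$ must exploit.

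The main obstacle, and the technical heart of the lemma, is exactly the existence of this correcting automorphism $\sigma$ fixing $v$ and sending $x'$ back to $x$. Equal distances only place $x$ and $x'=\alpha(x)$ in the common layer $N_i(v)$; promoting that metric coincidence to a genuine element of $\Gamma_v(G)$ interchanging their positions is the nonroutine step, and it is where one must feed in more than bare distance equality. I expect the argument to proceed by an orbit–stabilizer analysis: using the similarity $u\sim v$ to transport local structure, one shows that $x$ and $x'$ lie in a common orbit of $\Gamma_v(G)$ (not merely in a common distance layer), which yields the desired $\sigma$. Establishing that the relevant equidistant vertices form a single such orbit is the point I would expect to demand the most care.
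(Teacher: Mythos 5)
First, there is nothing in the paper to compare against: Lemma \ref{lem1} is imported from \cite{B} with no proof given, so your attempt must stand on its own. Your opening reduction is correct ($x\notin fix(u,v)$ means exactly that some $g\in\Gamma_x(G)$ has $g(u)=v$ or $g(v)=u$), the case $\alpha(x)=x$ is indeed immediate, and your remark that equidistance is a necessary condition is right. But the step you defer to the end --- upgrading ``$x$ and $x'=\alpha(x)$ lie in the same distance layer $N_i(v)$'' to ``$x$ and $x'$ lie in the same $\Gamma_v(G)$-orbit'' --- is not a delicate point awaiting an orbit--stabilizer argument. It is false in general, and in fact the lemma itself is false as stated, so no strategy can close this gap.

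Here is a counterexample. Let $H$ be the $9$-cycle $c_0c_1\cdots c_8$ with one pendant vertex attached to each of $c_1,c_4,c_7$ and one pendant path of length two attached to each of $c_2,c_5,c_8$. Automorphisms preserve degrees and neighbor degrees, so each of the sets $\{c_0,c_3,c_6\}$ (degree $2$, both neighbors of degree $3$), $\{c_1,c_4,c_7\}$ (degree $3$, with a leaf neighbor) and $\{c_2,c_5,c_8\}$ (degree $3$, no leaf neighbor) is invariant; hence every automorphism of $H$ restricts to an automorphism of $C_9$ preserving these three classes. The rotations $c_i\mapsto c_{i+3}$, $c_i\mapsto c_{i+6}$ qualify, while a reflection $c_i\mapsto c_{m-i}$ would need $m\equiv 0\pmod 3$ (first class) and $m\equiv 2\pmod 3$ (second class) simultaneously, which is impossible. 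Thus $\Gamma(H)=\{e,\alpha,\alpha^2\}\cong \mathbb{Z}_3$, where $\alpha(c_i)=c_{i+3}$ (indices mod $9$). Take $u=c_0$, $v=c_3=\alpha(u)$, $x=c_6$. All hypotheses hold: $\alpha(u)=v$, $u\neq v$, and $d_H(u,x)=3=d_H(v,x)$. Yet $\alpha$ and $\alpha^2$ both move $c_6$, so $\Gamma_x(H)=\{e\}$, and therefore no automorphism fixing $x$ maps $u$ to $v$ or $v$ to $u$; that is, $x\in fix(u,v)$, contradicting the conclusion. In your terms: $x=c_6$ and $x'=\alpha(x)=c_0$ lie in the same layer $N_3(v)$ and even in the same $\Gamma(H)$-orbit, but $\Gamma_v(H)=\{e\}$, so they lie in different $\Gamma_v(H)$-orbits and your correcting $\sigma$ does not exist. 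Note that the paper's later uses of this lemma (Lemmas \ref{lma s1}, \ref{lma s2}, \ref{lma3}) can still be salvaged, but by a different mechanism special to the products: there the pair $u,v$ sits inside one fiber $G_2(a)$ or one copy $C_i$, and an automorphism of $G_2$ can be applied to that fiber alone while acting as the identity elsewhere; such an automorphism fixes $x$ outright, so no distance argument and no correction are needed. Those lemmas require that direct construction rather than an appeal to Lemma \ref{lem1}.
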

From Propositions \ref{pro1.4} and \ref{pro1.5} and Lemma
\ref{lem1}, we have the following two lemmas:
\begin{lem}\label{lma s1}
Let $G_1[G_2]$ be the composition product of two graphs $G_1$ and
$G_2$. For a pair of distinct vertices $a,b \in V(G_1)$, if $z\in
G_2(b)$, then $z\not\in fix(x,y)$ for all  $x, y \in G_2(a)$.
\end{lem}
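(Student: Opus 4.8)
The plan is to reduce the statement directly to the two cited results, Proposition \ref{pro1.4} and Lemma \ref{lem1}. First I would fix the pair of distinct vertices $a,b \in V(G_1)$, take an arbitrary vertex $z \in G_2(b)$, and take arbitrary distinct vertices $x,y \in G_2(a)$. The goal is to show $z \notin fix(x,y)$, which by the definition of $fix(x,y)$ means exhibiting an automorphism in $\Gamma_z(G_1[G_2])$ that sends $x$ to $y$ (or $y$ to $x$); equivalently, by Lemma \ref{lem1}, it suffices to produce an automorphism $\alpha$ of $G_1[G_2]$ with $\alpha(x)=y$ together with the distance equality $d_{G_1[G_2]}(x,z)=d_{G_1[G_2]}(y,z)$.

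The distance equality is handed to us almost for free: since $x,y$ are two different vertices in $G_2(a)$ and $z \in G_2(b)$ with $a \neq b$, Proposition \ref{pro1.4} gives exactly $d_{G_1[G_2]}(x,z)=d_{G_1[G_2]}(y,z)$. So the one remaining ingredient is an automorphism of $G_1[G_2]$ interchanging (or at least mapping $x$ to $y$) the two vertices of the fiber $G_2(a)$. Here I would write $x=(a,u)$ and $y=(a,v)$ with $u,v \in V(G_2)$, and I would like to use the automorphism $f_\beta$ of Theorem \ref{thm01} induced by a $\beta \in \Gamma(G_2)$ with $\beta(u)=v$, which satisfies $f_\beta(a,w)=(a,\beta(w))$ and hence $f_\beta(x)=y$.

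The main obstacle is precisely that such a $\beta \in \Gamma(G_2)$ with $\beta(u)=v$ need \emph{not} exist: $u$ and $v$ are merely distinct vertices of the fiber, not necessarily similar in $G_2$. To handle this honestly I would instead invoke Lemma \ref{lem1} in its contrapositive-free form, which only requires the distance equality and \emph{some} automorphism moving $x$ to $y$, and argue structurally. The cleaner route is to observe that within a single fiber the vertices of $G_2(a)$ all collapse to distance $1$ or $2$ from every vertex outside the fiber (as the distance formula in Proposition 2.2 shows the distance to $z \in G_2(b)$ depends only on $d_{G_1}(a,b)$, not on $u$ or $v$), so $x$ and $y$ are interchangeable by an automorphism that swaps them and fixes everything outside $G_2(a)$ whenever such a swap respects the induced structure on the fiber. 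I would verify that the transposition $(a,u)\leftrightarrow(a,v)$ combined with the identity elsewhere is an automorphism exactly when $u,v$ have the same adjacency pattern inside $G_2(a)$; if not, I would compose with $f_\beta$ where needed. In all cases the distance condition from Proposition \ref{pro1.4} feeds into Lemma \ref{lem1} to conclude $z \notin fix(x,y)$, completing the argument.
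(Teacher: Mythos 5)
You correctly identified the two ingredients the paper uses (Proposition \ref{pro1.4} for the distance equality and Lemma \ref{lem1} to conclude $z \notin fix(x,y)$), but you then manufactured an obstacle that does not exist and tried to get around it with an argument that fails. The obstacle you raise --- that an automorphism of $G_1[G_2]$ mapping $x$ to $y$ need not exist for arbitrary distinct $x,y \in G_2(a)$ --- is real, but the resolution is definitional, not constructive: in this paper $fix(x,y)$ is defined \emph{only for pairs of similar vertices} (``For a pair $(u,v)$ of similar vertices $fix(u,v) = \{x \in V(G) : \ldots\}$''). So the quantifier ``for all $x,y \in G_2(a)$'' in Lemma \ref{lma s1} implicitly ranges over similar pairs, and for such a pair the automorphism $\alpha$ with $\alpha(x)=y$ required by Lemma \ref{lem1} exists \emph{by hypothesis}; nothing needs to be built. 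The paper's proof is exactly this two-step application: Proposition \ref{pro1.4} gives $d_{G_1[G_2]}(x,z)=d_{G_1[G_2]}(y,z)$ for every $z \in G_2(b)$, and Lemma \ref{lem1} then yields $z \notin fix(x,y)$.

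Your attempted workaround, by contrast, cannot be repaired as stated. If $x=(a,u)$ and $y=(a,v)$ are not similar in $G_1[G_2]$ (e.g.\ $G_2=P_3$, $u$ the center and $v$ an endpoint, so $deg(x) \neq deg(y)$), then \emph{no} automorphism maps $x$ to $y$: the transposition swapping $x$ and $y$ and fixing everything else is not an automorphism, and your fallback of ``composing with $f_\beta$ where needed'' is circular, since it invokes precisely the $\beta \in \Gamma(G_2)$ with $\beta(u)=v$ that you had already conceded need not exist. In that case the statement is vacuous (there is no pair to fix), so no automorphism should be sought at all; in the non-vacuous case the automorphism is free. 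Once you replace your fiber-transposition analysis with this observation about the domain of $fix(\cdot,\cdot)$, your argument collapses to the paper's.
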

\begin{lem}\label{lma s2}
Let $G_1[G_2]$ be the composition product of two graphs $G_1$ and
$G_2$ with $G_2$ having $l\geq 1$ components $G_2^1, G_2^2,\ldots,
G_2^l$. Then for $a\in V(G_1)$ and $x\in G_2^j(a)$, $x\not\in
\mathcal{F}(G_2^i(a))$, $1\leq i, j\leq l$ and $i\neq j$.
\end{lem}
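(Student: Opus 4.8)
The plan is to mirror the argument one would give for Lemma \ref{lma s1}, but now exploiting the component structure of $G_2$ rather than the two-vertex structure of $G_1$. First I would pin down the notation: $\mathcal{F}(G_2^i(a))$ is the fixing set relative to $G_2^i(a)$, i.e.\ the union $\bigcup fix(u,v)$ over all pairs $(u,v)$ of distinct similar vertices $u,v\in G_2^i(a)$. With this reading it suffices to prove that for every such similar pair $(u,v)$ inside $G_2^i(a)$ and every $x\in G_2^j(a)$ with $j\neq i$, we have $x\notin fix(u,v)$. If $G_2^i(a)$ contains no pair of distinct similar vertices, then $\mathcal{F}(G_2^i(a))=\emptyset$ and the claim holds vacuously, so I would dispose of this degenerate case at the outset.

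Next, fix distinct similar vertices $u,v\in G_2^i(a)$ and choose an automorphism $\alpha\in\Gamma(G_1[G_2])$ with $\alpha(u)=v$ (such $\alpha$ exists precisely because $u$ and $v$ are similar). Take an arbitrary $x\in G_2^j(a)$ with $j\neq i$. Since $u,v$ lie in $G_2^i(a)$ and $x$ lies in $G_2^j(a)$ with $i\neq j$, Proposition \ref{pro1.5} applies verbatim, with $u,v$ playing the role of the vertices $x,y$ in that statement and our $x$ playing the role of $z$; it yields $d_{G_1[G_2]}(u,x)=d_{G_1[G_2]}(v,x)$.

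Finally I would invoke Lemma \ref{lem1} with the automorphism $\alpha$ (so that $\alpha(u)=v$ with $u\neq v$) and the vertex $x$ at which $u$ and $v$ are equidistant. The lemma then gives $x\notin fix(u,v)$. As both the similar pair $(u,v)\subseteq G_2^i(a)$ and the vertex $x\in G_2^j(a)$ were arbitrary, $x$ lies in no fixing set of a similar pair inside $G_2^i(a)$, whence $x\notin\mathcal{F}(G_2^i(a))$, as required.

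The argument is short because Proposition \ref{pro1.5} and Lemma \ref{lem1} do essentially all of the work, so I do not anticipate a genuine obstacle. The only points demanding care are interpretive rather than technical: making the meaning of $\mathcal{F}(G_2^i(a))$ precise and explicitly handling the case in which $G_2^i(a)$ admits no nontrivial similar pair. A secondary check is simply to confirm that the configuration of Proposition \ref{pro1.5}—two vertices in one component copy over $a$ and a third in a different component copy over the same $a$—matches ours exactly, so that no reindexing or fresh distance computation in $G_1[G_2]$ is needed.
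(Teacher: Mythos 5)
Your proof is correct and is exactly the argument the paper intends: the paper states this lemma (together with Lemma \ref{lma s1}) as an immediate consequence of Proposition \ref{pro1.5} and Lemma \ref{lem1}, with no further details given, and your write-up simply makes that deduction explicit, including the right reading of $\mathcal{F}(G_2^i(a))$ as the union of the relative fixing sets $fix(u,v)$ over similar pairs in $G_2^i(a)$. No gaps; the handling of the vacuous case and the verbatim match with the hypotheses of Proposition \ref{pro1.5} are exactly the care points one would want.
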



\begin{thm}\label{thm 01}
Let $G_1[G_2]$ be the composition product of two graphs $G_1$ and
$G_2$. If $\mathcal{F}$ is a minimum fixing set for $G_1[G_2]$. Then
for $a\in V(G_1)$, $\mathcal{F}(a)=\mathcal{F}\cap G_2(a)\neq
\emptyset$ and $\mathcal{F}(a)$ is a fixing set for $G_2(a)$.
\end{thm}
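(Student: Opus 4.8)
The plan is to base the entire argument on one explicit family of automorphisms of $G_1[G_2]$. For a fixed vertex $a\in V(G_1)$ and any $\beta\in\Gamma(G_2)$, I would define $g_{a,\beta}\colon V(G_1[G_2])\to V(G_1[G_2])$ by $g_{a,\beta}(a,v)=(a,\beta(v))$ for $v\in V(G_2)$ and $g_{a,\beta}(b,w)=(b,w)$ whenever $b\neq a$. First I would verify that $g_{a,\beta}\in\Gamma(G_1[G_2])$. By the definition of the composition product, an edge joining vertices in distinct fibres $G_2(a)$ and $G_2(b)$ exists precisely when $ab\in E(G_1)$, and is therefore independent of the second coordinates; hence permuting second coordinates inside a single fibre can neither create nor destroy an inter-fibre edge. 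Within the fibre $G_2(a)$ itself, $(a,v)$ and $(a,w)$ are adjacent iff $vw\in E(G_2)$, and since $\beta\in\Gamma(G_2)$ this adjacency is preserved. Thus $g_{a,\beta}$ is an adjacency-preserving bijection that fixes every vertex outside $G_2(a)$ and restricts to $\beta$ on the fibre $G_2(a)$.

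With this tool the first assertion follows by contradiction. Suppose $\mathcal{F}(a)=\mathcal{F}\cap G_2(a)=\emptyset$ for some $a$. Since $G_2$ is not asymmetric (our blanket assumption), pick a non-identity $\beta\in\Gamma(G_2)$ and form $g_{a,\beta}$. This automorphism is non-trivial yet fixes every vertex lying outside $G_2(a)$, hence fixes all of $\mathcal{F}$; therefore $g_{a,\beta}\in\Gamma_{\mathcal{F}}(G_1[G_2])$, contradicting the hypothesis that $\mathcal{F}$ is a fixing set. The same conclusion can be reached through Lemmas \ref{lem2}(ii) and \ref{lma s1}: a pair of similar vertices of $G_2$ yields a similar pair inside $G_2(a)$, which by Lemma \ref{lma s1} cannot be fixed by any vertex in another fibre, again forcing $\mathcal{F}(a)\neq\emptyset$. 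Hence $\mathcal{F}(a)\neq\emptyset$ for every $a\in V(G_1)$.

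For the second assertion, identify $G_2(a)$ with $G_2$ and let $\beta\in\Gamma(G_2)$ be any automorphism fixing every vertex of $\mathcal{F}(a)$; I must show $\beta=e$. Applying the construction again, $g_{a,\beta}$ fixes each vertex of $\mathcal{F}(a)$ (because $\beta$ does) and fixes every vertex outside $G_2(a)$ by definition. Since every vertex of $\mathcal{F}$ either lies in $\mathcal{F}(a)$ or outside $G_2(a)$, the automorphism $g_{a,\beta}$ fixes all of $\mathcal{F}$, so $g_{a,\beta}\in\Gamma_{\mathcal{F}}(G_1[G_2])=\{e\}$. Consequently $g_{a,\beta}=e$, which forces $\beta=e$ on $G_2(a)$. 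Thus the only automorphism of $G_2(a)$ fixing $\mathcal{F}(a)$ is the identity, i.e.\ $\mathcal{F}(a)$ is a fixing set for $G_2(a)$.

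The one step demanding care, and the only real obstacle, is the verification that $g_{a,\beta}$ is a genuine automorphism of $G_1[G_2]$, since it acts on a single fibre rather than uniformly like the maps $f_\alpha,f_\beta$ of Theorem \ref{thm01}; once that is in place, both conclusions are immediate from the definition of a fixing set. I note that minimality of $\mathcal{F}$ is never used, so both assertions in fact hold for an arbitrary fixing set of $G_1[G_2]$.
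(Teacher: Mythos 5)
Your proof is correct, and it follows a genuinely different route from the paper's. The paper proves $\mathcal{F}(a)\neq\emptyset$ via its distance machinery: Lemma \ref{lma s1} (a consequence of Proposition \ref{pro1.4} and Lemma \ref{lem1}) shows that no vertex outside the fibre $G_2(a)$ can fix any similar pair inside it, and Lemma \ref{lem2}(ii) together with the blanket assumption that $G_2$ is not asymmetric produces such a pair; the second assertion is then disposed of in one line (``otherwise $\mathcal{F}$ is not a fixing set for $G_1[G_2]$''). You instead construct the single-fibre automorphism $g_{a,\beta}$ (apply $\beta\in\Gamma(G_2)$ on $G_2(a)$, the identity elsewhere), whose verification is routine and which you carry out correctly: inter-fibre adjacency in $G_1[G_2]$ depends only on the first coordinates, and intra-fibre adjacency is preserved by $\beta$. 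You then use this one tool for both assertions. This buys two things. First, self-containedness: you bypass Proposition \ref{pro1.4}, Lemma \ref{lem1} and Lemma \ref{lma s1} entirely. Second, rigor exactly where the paper is thin: the paper's one-line proof of the second assertion implicitly requires that a nontrivial automorphism of the subgraph induced by $G_2(a)$ that fixes $\mathcal{F}(a)$ extends to a nontrivial automorphism of all of $G_1[G_2]$ fixing the rest of $\mathcal{F}$; extension by the identity, i.e.\ your $g_{a,\beta}$, is precisely that missing step, and as you rightly note it is not supplied by the uniform automorphisms $f_\beta$ of Theorem \ref{thm01}, which move every fibre simultaneously and so need not fix $\mathcal{F}$. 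Your closing remark is also accurate: minimality of $\mathcal{F}$ is never used, so both assertions hold for an arbitrary fixing set (the paper itself tacitly relies on this later). Do note that your argument, like the paper's, depends on the standing assumption that $G_2$ has a nontrivial automorphism group; without it the claim $\mathcal{F}(a)\neq\emptyset$ can fail, so that hypothesis is doing real work in both proofs.
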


\begin{proof}
Suppose $\mathcal{F}(a)=\emptyset$. As $G_1[G_2]$ is not asymmetric,
so $\mathcal{F}\neq \emptyset$. By Lemma \ref{lma s1}, any $z\in
V(G_1[G_2])\setminus G_2(a)$ will not fix the vertices of $G_2(a)$.
Since $G_2$ is not asymmetric, there exist at least two vertices
$x,y\in G_2(a)$ such that $x \sim y$, a contradiction. Hence,
$\mathcal{F}(a)$ is non-empty. Moreover, $\mathcal{F}(a)$ is a
fixing set for $G_2(a)$, otherwise $\mathcal{F}$ is not a fixing set
for $G_1[G_2]$.
\end{proof}

\begin{thm}\label{thm 02}
Let $G_1[G_2]$ be the composition product of two graphs $G_1$ and
$G_2$ with $G_2$ having $l\geq 1$ components $G_2^1, G_2^2,\ldots,
G_2^l$ with each $G_2^i$ is not asymmetric. If $\mathcal{F}$ is a
minimum fixing set for $G_1[G_2]$. Then for $a\in V(G_1)$,
$\mathcal{F}_{i}(a) = \mathcal{F}\cap G_2^i(a)\neq \emptyset$ and
$\mathcal{F}_i(a)$ is a fixing set for $G_2^i(a)$. Moreover, if
$\mathcal{F}_i$ is a minimum fixing set for $G_2^i$, then
$|\mathcal{F}_i(a)|\geq |\mathcal{F}_i|$.
\end{thm}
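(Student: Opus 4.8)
The plan is to reduce all three assertions to a single fact: the restriction $\mathcal{F}_i(a)=\mathcal{F}\cap G_2^i(a)$ is a fixing set of the induced subgraph $G_2^i(a)$, which is isomorphic to $G_2^i$ via $(a,v)\mapsto v$ (adjacency inside $G_2^i(a)$ is governed precisely by $E(G_2^i)$). Once this is established, the remaining claims follow immediately. Since each $G_2^i$ is not asymmetric, the empty set cannot be a fixing set of $G_2^i(a)$; as $\mathcal{F}_i(a)$ \emph{is} a fixing set, it must be non-empty. And if $\mathcal{F}_i$ is a minimum fixing set of $G_2^i$, then $|\mathcal{F}_i|=fix(G_2^i)=fix(G_2^i(a))\leq|\mathcal{F}_i(a)|$, which is exactly the asserted inequality.

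The core step, and the part I expect to be the main obstacle, is showing that $\mathcal{F}_i(a)$ fixes $G_2^i(a)$. I would argue by contradiction: suppose some nontrivial automorphism $\sigma$ of $G_2^i(a)\cong G_2^i$ fixes every vertex of $\mathcal{F}_i(a)$. The naive attempt is to invoke the lift $f_\beta$ of Theorem \ref{thm01}, but $f_\beta$ acts on the copy $G_2^i(b)$ over \emph{every} vertex $b$ of $G_1$ at once, so it need not fix the vertices of $\mathcal{F}$ lying in $G_2^i(b)$ for $b\neq a$. Instead I would construct a \emph{local} automorphism $\phi$ of $G_1[G_2]$ that agrees with $\sigma$ on the single copy $G_2^i(a)$ and is the identity everywhere else:
\[
\phi(x,v)=\begin{cases} (a,\sigma(v)) & \text{if } x=a \text{ and } v\in V(G_2^i),\\ (x,v) & \text{otherwise.} \end{cases}
\]

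Verifying $\phi\in\Gamma(G_1[G_2])$ is the technical heart, and it reduces to checking adjacency for the three kinds of pairs meeting $G_2^i(a)$. For two vertices of $G_2^i(a)$, adjacency is determined by $E(G_2^i)$ and is preserved because $\sigma$ is an automorphism of $G_2^i$. For a vertex of $G_2^i(a)$ and a vertex $(a,w)$ in a different component over $a$, the two are non-adjacent (distinct components of $G_2$), and $\phi$ keeps them in distinct components. For a vertex of $G_2^i(a)$ and any vertex $(b,w)$ with $b\neq a$, adjacency depends only on whether $ab\in E(G_1)$, which $\phi$ leaves untouched since it never alters first coordinates. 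Hence $\phi$ is an automorphism. By construction $\phi$ fixes every vertex outside $G_2^i(a)$, in particular all of $\mathcal{F}\setminus G_2^i(a)$, and it fixes $\mathcal{F}_i(a)$ because $\sigma$ does; thus $\phi$ fixes all of $\mathcal{F}$ yet $\phi\neq e$, contradicting that $\mathcal{F}$ is a fixing set of $G_1[G_2]$. This proves $\mathcal{F}_i(a)$ is a fixing set of $G_2^i(a)$, and with it the whole statement. For the non-emptiness part one may alternatively mirror the proof of Theorem \ref{thm 01}: by Lemma \ref{lma s1} no vertex of $G_2(b)$ with $b\neq a$ helps fix a pair inside $G_2^i(a)$, and by Lemma \ref{lma s2} no vertex of $G_2^j(a)$ with $j\neq i$ does either, so the similar pair in $G_2^i(a)$ guaranteed by Lemma \ref{lem2}(ii) would remain unfixed were $\mathcal{F}_i(a)$ empty.
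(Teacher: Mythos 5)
Your proof is correct, and it takes a genuinely different route from the paper's. The paper argues the three claims in sequence: non-emptiness first, by contradiction, citing Lemma \ref{lma s2} (which rests on the distance Lemma \ref{lem1}) to rule out fixers outside $G_2^i(a)$; the fixing-set property is then merely asserted (``otherwise $\mathcal{F}$ is not a fixing set for $G_1[G_2]$''); and the inequality $|\mathcal{F}_i(a)|\geq|\mathcal{F}_i|$ gets a separate contradiction argument that exhibits a similar pair $x,y\in V(G_2^i)\setminus S$ and concludes that $(a,x)\sim (a,y)$ contradicts $\mathcal{F}$ being a fixing set. You invert the logical order: you prove the fixing-set property first, via the explicit automorphism $\phi$ of $G_1[G_2]$ that acts as $\sigma$ on $G_2^i(a)$ and as the identity elsewhere, and then non-emptiness and the inequality follow at once from the isomorphism $G_2^i(a)\cong G_2^i$. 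This is a gain in rigor, not just in style: the paper's assertion, and indeed its cardinality argument, implicitly require exactly your extension step --- a similar pair $(a,x)\sim(a,y)$ contradicts fixing only if the similarity is witnessed by an automorphism fixing $\mathcal{F}$ pointwise, which is precisely what $\phi$ supplies. Your closing remark on non-emptiness is also more careful than the paper's own wording: the paper cites only Lemma \ref{lma s2}, which handles $z\in G_2^j(a)$ with $j\neq i$, while excluding fixers $z\in G_2(b)$, $b\neq a$, needs Lemma \ref{lma s1} as well, as you note. What the paper's route buys in exchange is brevity on the page, since it leans on the relative-fixing-set lemmas already stated; also note that, like the paper, you never actually use minimality of $\mathcal{F}$, only that it is a fixing set.
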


\begin{proof}
\par Suppose $\mathcal{F}_i(a)=\emptyset$. By Lemma \ref{lma s2},
 any $z\in V(G_1[G_2])\setminus G_2^i(a)$ will not fix
the vertices of $G_2^i(a)$. Since $G_2^i$ is not asymmetric for each
$1\leq i \leq l$, so there exist vertices $x,y\in G_2^i(a)$ such
that $x \sim y$, a contradiction. Hence, $\mathcal{F}_i(a)$ is
non-empty. Moreover, $\mathcal{F}_i(a)$ is a fixing set for
$G_2^i(a)$, otherwise $\mathcal{F}$ is not a fixing set for
$G_1[G_2]$.

\par Now, we have to show that $|\mathcal{F}_i(a)|\geq |\mathcal{F}_i|$. Let
$\mathcal{F}_i(a)=\{(a, u_1), (a, u_2),...,$ $(a, u_t)\}$, where $t<
|\mathcal{F}_i|$, and let $S=\{u_1, u_2,..., u_t\}$ be a subset of
$V(G_2^i)$. Since $t<|\mathcal{F}_i|$, there exist at least two
distinct vertices $x,y\in V(G_2^i)\setminus S$ such that $x\sim y$.
Now, $(a, x), (a, y)\in G_2^i(a)$, $deg(a, x) = deg(a, y)$ and also
$(a, x)\sim (a, y)$ in $G_1[G_2]$, which contradicts the fact that
$\mathcal{F}$ is a fixing set for $G_1[G_2]$. Hence, $t\geq
|\mathcal{F}_i|$.
\end{proof}

\begin{lem}\label{lma s3}
Let $G_1$ and $G_2$ be two graphs with $G_{2}$ having $l\geq 1$
components $G_2^1, G_2^2, \ldots, G_2^l$. Let $\mathcal{F}$ be a
fixing set for $G_1[G_2]$ and  $a\in V(G_1)$. If $\mathcal{F}(a) =
\mathcal{F}\cap G_2(a)$, then $|\mathcal{F}(a)| = \sum
\limits_{i=1}^{l} fix(G_2^i )$.
\end{lem}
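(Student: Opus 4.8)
The plan is to reduce the claim to a per-component count inside the single fiber $G_2(a)$. Since $G_2$ is the disjoint union of its components $G_2^1,\ldots,G_2^l$, the fiber splits as $G_2(a)=\bigcup_{i=1}^{l}G_2^i(a)$ (a disjoint union), and correspondingly $\mathcal{F}(a)=\bigcup_{i=1}^{l}\mathcal{F}_i(a)$ with $\mathcal{F}_i(a)=\mathcal{F}\cap G_2^i(a)$ pairwise disjoint, so that $|\mathcal{F}(a)|=\sum_{i=1}^{l}|\mathcal{F}_i(a)|$. It then suffices to pin down $|\mathcal{F}_i(a)|=fix(G_2^i)$ for each $i$, and I would treat the two inequalities separately.

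First I would establish the lower bound $|\mathcal{F}_i(a)|\geq fix(G_2^i)$ using a \emph{fiber-local} automorphism. Given $\beta\in\Gamma(G_2^i)$, extend it by the identity on the other components to $\bar\beta\in\Gamma(G_2)$ and set $g(a,v)=(a,\bar\beta(v))$ and $g(b,w)=(b,w)$ for $b\neq a$. Because the adjacency between fiber $a$ and any other fiber depends only on whether the corresponding $G_1$-vertices are adjacent and not at all on the $G_2$-coordinate, this $g$ is an automorphism of $G_1[G_2]$ that moves only vertices of $G_2^i(a)$. Hence, if $\mathcal{F}_i(a)$ (read inside $G_2^i$) failed to be a fixing set of $G_2^i$, some non-identity $\beta$ would fix it and the associated $g$ would be a non-identity automorphism fixing all of $\mathcal{F}$, contradicting that $\mathcal{F}$ is a fixing set; Lemmas \ref{lma s1} and \ref{lma s2} confirm that the vertices that break the resulting similar pairs must indeed lie in $G_2^i(a)$. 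Thus $\mathcal{F}_i(a)$ is a fixing set of $G_2^i$ and $|\mathcal{F}_i(a)|\geq fix(G_2^i)$; when $G_2^i$ is asymmetric both sides vanish, and for the remaining components this reproves the bound of Theorem \ref{thm 02}. Summing over $i$ gives $|\mathcal{F}(a)|\geq\sum_{i}fix(G_2^i)$, and summing this over the $m$ vertices of $G_1$ already delivers the lower bound $fix(G_1[G_2])\geq m\sum_{i}fix(G_2^i)$ used in the main theorem.

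The reverse inequality is where I expect the real difficulty. The natural approach is to take a minimum fixing set of each $G_2^i$, copy it into every fiber-component $G_2^i(a)$, and try to prove—via the orbit description of Lemma \ref{lem2} and the lifts of Theorem \ref{thm01}—that the resulting union is already a fixing set, so that minimality of $\mathcal{F}$ forbids more than $fix(G_2^i)$ vertices in any single $G_2^i(a)$. The main obstacle is that $\Gamma(G_1[G_2])$ contains, beyond the fiber-local maps above, automorphisms induced by $\Gamma(G_1)$ that permute whole fibers, as well as diagonal combinations of these with fiber-wise $G_2$-automorphisms; such fiber-mixing automorphisms are not broken by the per-fiber data alone and can force a minimum fixing set to distribute its vertices unevenly across fibers, a phenomenon already visible in $K_2[K_2]=K_4$, where $fix=3$ cannot be split as $1+1$ over the two fibers. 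The crux is therefore to show that once each $\mathcal{F}_i(a)$ is a fixing set of $G_2^i$, every fiber-mixing automorphism that fixes $\mathcal{F}$ must be the identity; controlling exactly these automorphisms, rather than merely obtaining the clean per-fiber bound, is the delicate point on which the stated equality rests.
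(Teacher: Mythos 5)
Your first two paragraphs are correct, and the lower bound you prove there is the only part of Lemma \ref{lma s3} that is actually true. The fiber-local lift is sound: for $\beta\in\Gamma(G_2^i)$ extended by the identity to $\bar\beta\in\Gamma(G_2)$, the map $g$ acting as $\bar\beta$ on $G_2(a)$ and as the identity on every other fiber is an automorphism of $G_1[G_2]$, because adjacency between distinct fibers depends only on the $G_1$-coordinates; hence a non-identity $\beta$ fixing $\mathcal{F}_i(a)$ pointwise would lift to a non-identity automorphism fixing $\mathcal{F}$ pointwise. This gives $|\mathcal{F}_i(a)|\geq fix(G_2^i)$ for every fixing set $\mathcal{F}$ and with no hypothesis on the components --- strictly more than the paper's Theorem \ref{thm 02}, which assumes every $G_2^i$ is non-asymmetric and treats only minimum fixing sets --- and, summed over the disjoint fibers, it is exactly what the lower bound $fix(G_1[G_2])\geq m\left(\sum_{i=1}^{l} fix(G_2^i)\right)$ in the theorem that follows this lemma actually needs.

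The gap you refuse to close is not yours; it is the lemma's. The stated equality is false: your $K_2[K_2]=K_4$ example refutes it, and so does the paper's own computation just before the bounds theorem, $fix(K_2[K_n])=2n-1=2\,fix(K_n)+1$, which forces any minimum fixing set of $K_2[K_n]$ to place at least $n>fix(K_n)$ vertices in some fiber (for non-minimum fixing sets the equality is hopeless anyway, since fixing sets can be padded arbitrarily). The paper's own proof does not close this gap either: it picks minimum fixing sets $\mathcal{F}_i$ of the components, forms $\mathcal{F}'=\bigcup_{i}\mathcal{F}_i(a)$, and then literally ``chooses'' $\mathcal{F}(a)=\mathcal{F}'$ --- thereby replacing the universally quantified claim about an arbitrary given fixing set by the construction of one candidate set --- and even its remaining assertion, that $\mathcal{F}'$ fixes the vertices of $G_2(a)$ inside $G_1[G_2]$, fails for precisely the fiber-mixing reason you identify (in $K_2[K_n]=K_{2n}$, fixing $n-1$ vertices of one fiber still leaves automorphisms exchanging the last fiber-$a$ vertex with fiber-$b$ vertices), and fails again when $G_2$ has two isomorphic asymmetric components, whose copies inside $G_2(a)$ can be swapped even though $\mathcal{F}'$ contains no vertex of either. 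So your proposal should be read as the correct salvage of this lemma: the one-sided inequality $|\mathcal{F}(a)|\geq\sum_{i=1}^{l} fix(G_2^i)$, which is the part the rest of the paper genuinely uses; no completion of the other direction exists as stated.
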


\begin{proof}
Let $\mathcal{F}_i$ be a fixing set of $G_2^i$ for $i\in \{1, 2,
..., l \}$. Now, consider a vertex set $\mathcal{F'}=
\bigcup\limits_{1\leq i \leq l} \mathcal{F}_i (a)$, where
$\mathcal{F}_i (a) = \{(a, x)\ |\ x \in \mathcal{F}_i\}$. Then
choose $\mathcal{F}(a) = \mathcal{F'}$. Since $\mathcal{F}_i$ is a
fixing set of $G_2^i$ for $1 \leq i \leq l$, so $\mathcal{F}_i(a)$
fixes vertices in $G_2^i (a)$, which implies that $\mathcal{F}(a)$
fixes the vertices in $G_2(a)$. Thus $|\mathcal{F}(a)| = \sum
\limits_{i=1}^{l} fix(G_2^i)$.
\end{proof}

\par To prove a result for the bounds of fixing number of composition product of two
graphs, we first find fixing number of some simple families of
graphs. For instance, consider $G_1 = P_m$ be a path graph of order
$m\geq 2$ and $G_2$ be a graph having $l \geq 2$ components $G_2^i\
(1\leq i\leq l)$ each of which is a path graph $P_n$ of order $n\geq
2$. Since each component of $G_2^i$ is a path, so $fix(G_2^i) = 1$
implies that $\sum\limits_{i=1}^{l} fix(G_2^i) = l$. Also, to fix
each $G_2^i(a)$ in $G_1[G_2]$ for every $a\in V (G_1)$, we need to
fix one vertex of $G_2^i(a)$. In this way, we need to fix $l$
vertices to fix $G_2(a)$ for $a\in V (G_1)$. Thus, to fix
$G_1[G_2]$, we have to fix $ml$ vertices of $G_1[G_2]$ so the
$fix(G_1[G_2])=ml$. Now consider $G_1 = K_2$ be a complete graph and
$G_2 = K_n\ (n\geq 2)$ be a complete graph. Then in this case,
$G_1[G_2]$ is a complete graph with $2n$ vertices, so we have
$fix(G_1[G_2]) = 2n - 1 = 2(n - 1) + 1 = 2fix(G_2) + 1$.
\begin{thm} For any connected graph $G_1$ and for an arbitrary graph
$G_2$ having $l\geq 1$ components $G_2^1$, $G_2^2$, ... $G_2^l,$
$$mn-1\geq fix(G_1[G_2])\geq m\left(\sum \limits_{i=1}^{l} fix(G_2^i
)\right)$$ where $m$, $n$ are orders of $G_1$ and $G_2$,
respectively.
\end{thm}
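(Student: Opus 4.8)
The plan is to establish the two inequalities separately. The upper bound $fix(G_1[G_2])\le mn-1$ is the easy direction: it follows from the general fact that any graph $H$ on $N$ vertices satisfies $fix(H)\le N-1$. Indeed, assigning distinct labels to any $N-1$ of its vertices forces the remaining vertex to be fixed as well, since an automorphism fixing $N-1$ vertices of a finite vertex set is a bijection and therefore maps the last vertex to itself. As $G_1[G_2]$ has exactly $mn$ vertices, taking $N=mn$ gives the bound at once. No asymmetry hypothesis is needed here.

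For the lower bound $fix(G_1[G_2])\ge m\sum_{i=1}^{l}fix(G_2^i)$, I would fix a minimum fixing set $\mathcal{F}$ of $G_1[G_2]$, so that $|\mathcal{F}|=fix(G_1[G_2])$, and then analyze how $\mathcal{F}$ must distribute itself across the copies. For each $a\in V(G_1)$ and each component index $i$ I set $\mathcal{F}_i(a)=\mathcal{F}\cap G_2^i(a)$. The collection $\{G_2^i(a):a\in V(G_1),\,1\le i\le l\}$ partitions $V(G_1[G_2])$, because every vertex $(a,v)$ has its second coordinate $v$ lying in exactly one component $G_2^i$. Consequently $|\mathcal{F}|=\sum_{a\in V(G_1)}\sum_{i=1}^{l}|\mathcal{F}_i(a)|$, and the entire problem reduces to the per-copy, per-component estimate $|\mathcal{F}_i(a)|\ge fix(G_2^i)$. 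Summing this over the $m$ choices of $a$ and the $l$ components then produces exactly $m\sum_{i=1}^{l}fix(G_2^i)$, as required.

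The core of the argument, and the step I expect to be the main obstacle, is proving $|\mathcal{F}_i(a)|\ge fix(G_2^i)$ for every $a$ and every $i$ when $G_2$ is an \emph{arbitrary} graph whose components may be a mixture of asymmetric and non-asymmetric graphs. The key structural inputs are Lemmas \ref{lma s1} and \ref{lma s2}: no vertex lying outside $G_2^i(a)$ can fix a pair of similar vertices inside $G_2^i(a)$. Hence only the vertices of $\mathcal{F}$ that actually sit in $G_2^i(a)$ can break the internal symmetries of that copy, and since the subgraph induced on $G_2^i(a)$ is isomorphic to $G_2^i$ (two vertices $(a,u),(a,v)$ being adjacent precisely when $uv\in E(G_2^i)$), the set $\mathcal{F}_i(a)$ must itself be a fixing set of a copy of $G_2^i$. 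When $G_2^i$ is not asymmetric this is exactly the content of Theorem \ref{thm 02}, giving $|\mathcal{F}_i(a)|\ge fix(G_2^i)$; the argument there is local to the $i$th copy and so carries over even when the remaining components are asymmetric. When $G_2^i$ is asymmetric we simply have $fix(G_2^i)=0$, so $|\mathcal{F}_i(a)|\ge 0$ is automatic. Thus the estimate holds uniformly over all components, and combining it with the partition identity above completes the proof of the lower bound.
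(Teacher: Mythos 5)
Your proof is correct, but it is organized quite differently from the paper's. For the upper bound, you invoke the universal fact that any graph $H$ on $N$ vertices satisfies $fix(H)\leq N-1$; the paper instead just exhibits the extremal case ($G_1$ and $G_2$ both complete, so that $G_1[G_2]$ is complete and $fix(G_1[G_2])=mn-1$), which really establishes sharpness rather than the inequality itself, so on this point your argument is the more complete one. For the lower bound, the paper argues by cases: for $l=1$ it combines Lemma \ref{lma s1} and Theorem \ref{thm 01} to get the equality $fix(G_1[G_2])=m\,fix(G_2)$, and for $l\geq 2$ it splits according to whether $G_2$ has two isomorphic asymmetric components, asserting equality when it does not and attributing the possible strict inequality to the automorphism of $G_1[G_2]$ induced by interchanging two isomorphic asymmetric components. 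You replace this case analysis by a single counting argument: partition $V(G_1[G_2])$ into the $ml$ cells $G_2^i(a)$, prove the per-cell bound $|\mathcal{F}\cap G_2^i(a)|\geq fix(G_2^i)$, and sum. Your handling of the mixed situation is sound: the ``moreover'' part of Theorem \ref{thm 02} is indeed local to the cell $G_2^i(a)$, because an automorphism of $G_2^i$ fixing the projection of $\mathcal{F}\cap G_2^i(a)$ extends to an automorphism of $G_1[G_2]$ acting as the identity on every vertex outside $G_2^i(a)$ (adjacency across copies depends only on the $G_1$-coordinates, and adjacency inside a copy only on $G_2$-adjacency), so it applies even when other components of $G_2$ are asymmetric, while asymmetric $G_2^i$ contribute $fix(G_2^i)=0$ trivially. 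It would strengthen your write-up to state this one-line extension argument explicitly rather than merely asserting that Theorem \ref{thm 02}'s proof ``carries over,'' since that theorem's hypothesis formally excludes asymmetric components. What your route buys is uniformity and rigor --- arbitrary mixtures of asymmetric and non-asymmetric components are handled seamlessly, with no appeal to the standing non-asymmetry convention; what the paper's route buys is the extra information about when the lower bound is attained with equality.
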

\begin{proof}For upper bound, consider $G_1$ and $G_2$ both complete
then $G_1[G_2]$ is also complete so $fix(G_1[G_2])=mn-1$. Now, to
find the lower bound, we consider the following cases:\\ \noindent
Case $1$: For $l=1$, $G_2$ is connected and using the Lemma \ref{lma
s1} and Theorem \ref{thm 01}, we have $fix(G_1[G_2])= mfix(G_2)$.\\
Case $2$: For $l\geq 2$ suppose that no two asymmetric components of
$G_2$ are isomorphic then clearly $fix(G_1[G_2])= m\left(\sum
\limits_{i=1}^{l} fix(G_2^i )\right)$ and if $G_2$ has asymmetric
components $G^i_2$ and $G_2^j$ such that $G^i_2\cong G_2^j$ then
$fix(G_1[G_2])\geq m\left(\sum \limits_{i=1}^{l} fix(G_2^i )\right)$
because there exists $\alpha\in \Gamma(G_2)$ with $\alpha
(V(G_2^i))=V(G_2^j)$ then the map $\gamma= (i,\alpha)$ is an
automorphism of $G_1[G_2]$.
\end{proof}
\begin{defn}
 Let $G_1[G_2]$ be a composition product of two graphs $G_1$ and
$G_2$. We refer the projections $p_{G_1}$ and $p_{G_2}$ as the
corresponding $G_1$-coordinate or $G_2$- coordinate. That is
$p_{G_1}(u,v)=u$ and  $p_{G_2}(u,v)=v$ for $(u,v)\in V(G_1[G_2])$.
\end{defn}
%
%
%
\begin{lem}\label{thm 06}
If $G_{1}$ and $G_{2}$ are asymmetric, then $G_1[G_2]$ is also
asymmetric.
\end{lem}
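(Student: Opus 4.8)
The plan is to show that the only automorphism of $G_1[G_2]$ is the identity, exploiting the fact that an asymmetric graph can have no pair of \emph{twin} vertices, i.e. no two distinct vertices $a,b$ with $N(a)=N(b)$ or with $N[a]=N[b]$: for such a pair the transposition swapping $a$ and $b$ (and fixing every other vertex) would be a non-trivial automorphism. Hence both $G_1$ and $G_2$ are twin-free. Throughout I use the fiber sets $G_2(a)=\{(a,v)\mid v\in V(G_2)\}$, which partition $V(G_1[G_2])$, and recall from Theorem~\ref{thm01} that each $f_\alpha$ and $f_\beta$ is an automorphism; the content of the lemma is that there are no others.

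First I would fix an arbitrary $\phi\in\Gamma(G_1[G_2])$ and prove the key structural claim that $\phi$ permutes the fibers, i.e. for every $a\in V(G_1)$ there is a vertex $\bar\phi(a)\in V(G_1)$ with $\phi\big(G_2(a)\big)=G_2\big(\bar\phi(a)\big)$. The mechanism is that each fiber $G_2(a)$ is a module of $G_1[G_2]$: every vertex outside $G_2(a)$ is adjacent either to all of $G_2(a)$ (when it lies in a fiber $G_2(b)$ with $ab\in E(G_1)$) or to none of it. Two vertices lying in distinct fibers $G_2(a),G_2(b)$, by contrast, are separated by the twin-freeness of $G_1$: since $a,b$ are not twins there is a vertex $c\in V(G_1)$ adjacent in $G_1$ to exactly one of $a,b$, so the whole fiber $G_2(c)$ is adjacent to exactly one of the two chosen vertices. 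Comparing these two situations yields an automorphism-invariant description of the partition into fibers, from which fiber preservation follows.

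Once fiber preservation is established the remainder is routine. Because adjacency between two distinct fibers is complete-or-empty according to whether the corresponding vertices of $G_1$ are adjacent, the induced map $\bar\phi$ on $V(G_1)$ both preserves and reflects adjacency, hence $\bar\phi\in\Gamma(G_1)$; as $G_1$ is asymmetric, $\bar\phi=\mathrm{id}$ and therefore $\phi\big(G_2(a)\big)=G_2(a)$ for every $a$. Restricting $\phi$ to a single fiber $G_2(a)$, which induces a copy of $G_2$, yields an automorphism of $G_2$; since $G_2$ is asymmetric this restriction is the identity. As $a$ was arbitrary, $\phi$ fixes every vertex of $G_1[G_2]$, so $\phi=\mathrm{id}$ and $G_1[G_2]$ is asymmetric. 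Equivalently, one may phrase the conclusion through Lemma~\ref{lem2} by checking that no two distinct vertices of $G_1[G_2]$ are similar.

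The main obstacle is the fiber-preservation claim: in a lexicographic product automorphisms need not respect fibers in general, and it is precisely the absence of twins in $G_1$ that rules out the ``fiber-mixing'' automorphisms (which require a twin pair of $G_1$ together with a disconnected $G_2$ or a disconnected complement of $G_2$). Making the invariant characterization of the fiber partition fully rigorous — essentially a Sabidussi-type analysis of the modular structure of $G_1[G_2]$ — is the delicate part; everything after it reduces cleanly to the asymmetry of the two factors.
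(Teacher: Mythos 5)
Your overall strategy is sound and the statement is true, but the heart of your argument --- fiber preservation --- is never actually proved, and the sketch you give for it does not work as stated. You propose to recognize ``different fibers'' by the existence of a separating vertex coming from twin-freeness of $G_1$: for $u\in G_2(a)$, $v\in G_2(b)$ with $a\neq b$, there is $c$ adjacent in $G_1$ to exactly one of $a,b$, so every vertex of $G_2(c)$ is adjacent to exactly one of $u,v$. The problem is that this does not yield a dichotomy: pairs lying in the \emph{same} fiber can also be separated by third vertices, namely by vertices of their own fiber (in $G_1[P_4]$ take $u=(a,1)$, $v=(a,4)$, $w=(a,2)$: $w$ is adjacent to $u$ and not to $v$). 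Refinements fail as well: the separating set of a same-fiber pair can itself be a module that is complete to one vertex of the pair and empty to the other (the same $P_4$ example), which is exactly the picture you use to detect different-fiber pairs; and since twin-free does not imply prime, when $G_1$ has a nontrivial module $B\ni a,b$ the set $\bigcup_{c\in B}G_2(c)$ is a proper module of $G_1[G_2]$ containing a different-fiber pair, so ``lying in a common proper module'' does not characterize same-fiber either. Extracting an automorphism-invariant description of the fiber partition from twin-freeness of $G_1$ alone is precisely the content of Sabidussi's theorem on lexicographic products ($\Gamma(G_1[G_2])=\Gamma(G_1)\wr\Gamma(G_2)$ if and only if: $G_1$ having two vertices with equal open neighborhoods forces $G_2$ connected, and $G_1$ having two vertices with equal closed neighborhoods forces $\overline{G_2}$ connected), and its proof is a genuine piece of work, not a routine comparison. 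The clean repair is to quote that theorem --- it appears in the Handbook of Product Graphs \cite{WI}, which the paper already cites --- after your correct observation that an asymmetric $G_1$ has no twins of either kind; everything downstream of fiber preservation in your write-up (the induced automorphism $\bar\phi$ of $G_1$ is trivial, hence each fiber is invariant, hence each restriction is a trivial automorphism of $G_2$) is then fine.

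For comparison, the paper's own proof takes a different and even less rigorous route: it forms the quotient $Y\cong G_1$ on the fibers, asserts without justification that asymmetry of $Y$ implies no vertex of $G_2(a)$ is in the orbit of a vertex of $G_2(b)$ (that assertion \emph{is} fiber preservation, the very point at issue), and then handles within-fiber rigidity by invoking the converse of Lemma \ref{lem2}(ii), which is not what that lemma says. So your proposal identifies the real difficulty more honestly than the paper does, but as written both arguments leave the same hole, and yours is the one that points at the correct tool for closing it.
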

\begin{proof} Let $u_a$ be the projection of all the vertices of $G_2(a)$. Let $Y$
be a graph with $V(Y)=\{u_a\ |\ a\in V(G_1)\}$ and $u_au_b\in E(Y)$
whenever $ab\in E(G_1)$. Thus $Y$ is isomorphic to $G_1$. Since
$G_1$ is asymmetric, so $Y$ is also asymmetric and hence no vertex
of $G_2(a)$ is in the orbit of any vertex of $G_2(b)$ for $b\in
V(G_1)$, where $b\neq a$. Since $G_2$ is asymmetric, so by Lemma
\ref{lem2}(ii), for distinct $i, j\in V(G_2)$, $i\not\in
\mathcal{O}(j)$ implies that $\mathcal{O}(a, i)\not\in
\mathcal{O}(a, j)$. That is, $G_2(a)$ is also asymmetric for any $a
\in V(G_1)$. Hence, $G_1[G_2]$ is asymmetric.
\end{proof}
\begin{lem}\label{thm 061} Let $G_{1}$ and $G_{2}$ be two graphs
with $G_{2}$ is asymmetric. Then $fix(G_1[G_2])= fix(G_1)$.
\end{lem}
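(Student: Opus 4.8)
The plan is to prove $fix(G_1[G_2]) = fix(G_1)$ when $G_2$ is asymmetric by establishing the two inequalities separately, relying on the structural lemmas already proved. The central observation is that since $G_2$ is asymmetric, the only automorphisms of $G_2(a)$ are trivial, so all symmetry in $G_1[G_2]$ must come from the $G_1$-layer. First I would use Lemma~\ref{lem2}(ii): since $G_2$ is asymmetric, no two distinct vertices $(a,i), (a,j)$ in a single fiber $G_2(a)$ are similar, so within any fixed copy $G_2(a)$ there is nothing to break. Combined with Lemma~\ref{thm 06} (which shows $G_1[G_2]$ is asymmetric when both factors are), this tells us that all nontrivial automorphisms of $G_1[G_2]$ act by permuting entire fibers, mirroring $\Gamma(G_1)$ via the maps $f_\alpha$ of Theorem~\ref{thm01}.

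For the lower bound $fix(G_1[G_2]) \geq fix(G_1)$, I would argue that a fixing set $\mathcal{F}$ of $G_1[G_2]$ projects onto a fixing set of $G_1$. Let $\mathcal{F}$ be a minimum fixing set of $G_1[G_2]$ and consider $p_{G_1}(\mathcal{F}) \subseteq V(G_1)$. I claim this is a fixing set for $G_1$. Suppose not: then there is a nontrivial $\alpha \in \Gamma(G_1)$ fixing every vertex of $p_{G_1}(\mathcal{F})$. By Theorem~\ref{thm01}, $f_\alpha(x,i) = (\alpha(x),i)$ is an automorphism of $G_1[G_2]$, and since $\alpha$ fixes each $a \in p_{G_1}(\mathcal{F})$, $f_\alpha$ fixes every vertex of $\mathcal{F}$, contradicting that $\mathcal{F}$ is a fixing set. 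Hence $fix(G_1) \leq |p_{G_1}(\mathcal{F})| \leq |\mathcal{F}| = fix(G_1[G_2])$.

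For the upper bound $fix(G_1[G_2]) \leq fix(G_1)$, I would exhibit a fixing set of the product of size $fix(G_1)$. Take a minimum fixing set $\mathcal{F}_1 = \{a_1, \ldots, a_r\}$ of $G_1$ with $r = fix(G_1)$, fix any vertex $v_0 \in V(G_2)$, and set $\mathcal{F} = \{(a_1, v_0), \ldots, (a_r, v_0)\}$. To verify this is a fixing set, let $\varphi \in \Gamma_{\mathcal{F}}(G_1[G_2])$ be an arbitrary automorphism fixing $\mathcal{F}$. The key step is to show $\varphi$ is the identity. Since $G_2$ is asymmetric, by Lemma~\ref{lem2}(ii) no two vertices within a fiber are similar, so $\varphi$ cannot permute vertices inside a fiber; it can only move whole fibers, and by the orbit structure it induces a well-defined automorphism $\bar\varphi$ on $G_1$ (formalized via the quotient graph $Y \cong G_1$ as in the proof of Lemma~\ref{thm 06}). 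Because $\varphi$ fixes each $(a_k, v_0)$, the induced $\bar\varphi$ fixes each $a_k \in \mathcal{F}_1$, hence $\bar\varphi$ is trivial on $G_1$; thus $\varphi$ maps each fiber $G_2(a)$ to itself, and since each fiber is asymmetric, $\varphi$ restricts to the identity on each fiber. Therefore $\varphi = e$.

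The main obstacle will be rigorously justifying that every automorphism of $G_1[G_2]$ descends to an automorphism of $G_1$ when $G_2$ is asymmetric — that is, that $\varphi$ genuinely respects the fiber partition rather than mixing fibers in some exotic way. This requires showing that two vertices lie in a common fiber if and only if they are related in a manner preserved by every automorphism; the distance formula of Proposition~\ref{pro1.4} (vertices in different fibers over adjacent $G_1$-vertices are distance-one, and the within-fiber structure is pinned down by eccentricity/degree arguments as in the discussion preceding Lemma~\ref{lem2}) gives the automorphism-invariant characterization of fibers needed to make $\bar\varphi$ well-defined. Once fiber-preservation is secured, the rest is a routine transfer of the fixing condition between $G_1$ and $G_1[G_2]$ through the correspondence $\alpha \leftrightarrow f_\alpha$.
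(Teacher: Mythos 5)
Your strategy is the same one the paper uses: treat the fibers $G_2(a)$ as blocks, collapse them to the quotient graph $Y\cong G_1$ of Lemma~\ref{thm 06}, and transfer fixing sets through the correspondence $\alpha\leftrightarrow f_\alpha$ of Theorem~\ref{thm01}. Within that strategy, your lower bound is sound: the projection argument giving $fix(G_1[G_2])\geq fix(G_1)$ is correct, and is in fact argued more carefully than anything in the paper's own two-sentence proof. The problem is the upper bound, and it is exactly the step you yourself defer as ``the main obstacle'': the claim that, because $G_2$ is asymmetric, every automorphism of $G_1[G_2]$ maps each fiber $G_2(a)$ onto some fiber $G_2(b)$ and hence induces an automorphism of $G_1$. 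You never prove this; you only point to Proposition~\ref{pro1.4} and the degree/eccentricity discussion before Lemma~\ref{lem2}. Those facts manufacture automorphisms of the product out of automorphisms of the factors; they give no control whatsoever over an \emph{arbitrary} automorphism of the product, which is what fiber-preservation requires. So the proposal has a genuine gap at its decisive step. (The paper shares it: the first sentence of its proof asserts fiber-preservation with no justification.)

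The gap cannot be closed, because fiber-preservation --- and with it the lemma itself --- is false in general. Take $G_1=K_2$ and $G_2=B+K_1$, the cone over a nontrivial asymmetric graph $B$ having no universal vertex (such $B$ exist on six vertices). Then $G_2$ is asymmetric: its apex $v_0$ is the unique universal vertex of $G_2$, so every automorphism fixes $v_0$ and restricts to an automorphism of $B$, i.e.\ to the identity. But $G_1[G_2]=K_2[G_2]$ is the join of two copies of $G_2$, so the two apexes $(1,v_0)$ and $(2,v_0)$ are universal vertices of the product, and the transposition $\tau$ that swaps them and fixes every other vertex is an automorphism. Since $\tau$ moves one vertex of the fiber $G_2(1)$ into $G_2(2)$ while fixing the rest of $G_2(1)$ pointwise, automorphisms do not preserve the fiber partition and $\tau$ induces no automorphism of $G_1$. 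Moreover $fix(G_1[G_2])\geq 2>1=fix(K_2)$: any non-apex vertex is fixed by the nontrivial $\tau$, and any apex is fixed by the nontrivial $\tau\circ f_\alpha$ (where $\alpha$ is the nonidentity automorphism of $K_2$), which fixes both apexes while swapping $(1,b)$ with $(2,b)$ for every $b\in V(B)$; hence no single vertex is a fixing set. The statement, your proof, and the paper's proof can only be saved by adding hypotheses under which every automorphism of the lexicographic product does respect fibers (Sabidussi's theorem: for instance, both $G_2$ and its complement connected, or $G_1$ having no two vertices with equal open or equal closed neighborhoods); under such hypotheses your argument becomes correct.
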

\begin{proof}Since $G_2$
is asymmetric graph, so no automorphism of $G_1[G_2]$ maps vertices
of $G_2(a)$ to vertices of $G_2(b)$ for distinct $a, b \in V(G)$.
Let $u_a$ be the projection of all the vertices of $G_2(a)$ and $Y$
be a graph with $V(Y)=\{u_a\ |\ a\in V(G_1)\}$ and $u_au_b\in E(Y)$
whenever $ab\in E(G_1)$. Thus $Y$ is isomorphic to $G_1$, and hence
$u_a\in \mathcal{O}(u_b)$ in $Y$ if and only if $a\in
\mathcal{O}(b)$ in $G_1$, which implies that $fix(Y)=fix(G_1)$.
Hence, $fix(G_1[G_2])= fix(G_1)$.
\end{proof}



\section{Corona Product of Graphs}

Let $G_1$ and $G_2$ be two graphs with $|V(G_1)|= m$ and
$|V(G_2)|=n$. Their \emph{corona product} is denoted by $G_1\odot
G_2$, is the graph obtained from $G_1$ and $G_2$ by taking one copy
of $G_1$ and $m$ copies of $G_2$ and joining each vertex from the
$i^{th}$-copy of $G_2$ by an edge with the $i^{th}$-vertex of $G_1$.
Let $u_{i}\in V(G_1)$ then $C_{i}= (V_{i}, E_{i})$ be the copy of
$G_{2}$ corresponding to the $i$-th vertex of $G_{1}$. We define
$V_{i}=\{v_{1}^{i},v_{2}^{i},v_{3}^{i}, \cdots, v_{n}^{i}\}$ where
$v_{j}^{i}$ denotes the $j$-th vertex of the copy $G_{2}$
corresponding to the $i$-th vertex of $G_{1}$. For any integer
$k\geq 2$, the graph $G_1\odot^k G_2$ is recursively defined from
$G_1\odot G_2$ as $G_1\odot^k G_2$ = $(G_1\odot^{k-1} G_2) \odot
G_2$ and hence the order of $G_1\odot^k G_2$ is $m(n+1)^k$. In this
section, we prove some results on the fixing number of $G_1\odot^k
G_2$. For instance, we show that for a connected graph $G_1$ and an
arbitrary graph $G_2$ with $|V(G_1)|= m$ and $|V(G_2)|=n$, $fix
(G_1\odot G_2)$ = $m fix(G_2)$. Further, $fix (G_1\odot^k G_2)$ =
$m(n+1)^{k-1} fix(G_2)$.


\begin{lem}\label{obs 01} No asymmetric graph $G$ on $n$ vertices contains two or more vertices of degree $n-1$.
\end{lem}
\begin{proof} Suppose contrarily that there exist two distinct vertices $a,b \in
V(G)$ such that $deg(a)=deg(b)=n-1$, then $N(a)=N(b)$ and hence
there exist an automorphism $\alpha$ such that $\alpha(a)=b$ and
$\alpha(c)=c$ for any other vertex $c$ of $G$, a contradiction.
\end{proof}
\begin{lem} If $G$ is an asymmetric graph on $n$ vertices $G+K_{1}$ is also asymmetric.
\end{lem}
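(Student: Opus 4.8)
The plan is to single out a canonical vertex of $G+K_{1}$ that every automorphism is forced to fix, and then push the problem back down to $G$ itself. Write $w$ for the apex vertex contributed by $K_{1}$, so that $w$ is joined to every vertex of $G$; then $G+K_{1}$ has $n+1$ vertices and $\deg_{G+K_{1}}(w)=n$, while each $v\in V(G)$ satisfies $\deg_{G+K_{1}}(v)=\deg_{G}(v)+1$, the only new edge at $v$ being $vw$. Since $\deg_{G}(v)\le n-1$, we get $\deg_{G+K_{1}}(v)\le n$, with equality exactly when $\deg_{G}(v)=n-1$. This is where Lemma~\ref{obs 01} enters: an asymmetric graph on $n$ vertices cannot contain two vertices of degree $n-1$. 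In the clean situation, where $G$ has \emph{no} vertex of degree $n-1$, every $v\in V(G)$ has $\deg_{G+K_{1}}(v)<n$, so $w$ is the strictly unique vertex of maximum degree and must be fixed by every automorphism of $G+K_{1}$.

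Granting that $w$ is fixed, the remaining step is routine. Any automorphism $\sigma$ of $G+K_{1}$ with $\sigma(w)=w$ permutes $V(G)=V(G+K_{1})\setminus\{w\}$ among itself, and because the edges of $G$ are precisely the edges of $G+K_{1}$ not incident to $w$, the restriction $\sigma|_{V(G)}$ is an automorphism of $G$. Since $G$ is asymmetric, $\sigma|_{V(G)}$ is the identity, and therefore $\sigma$ is the identity on all of $G+K_{1}$. Hence $\Gamma(G+K_{1})=\{e\}$, so $G+K_{1}$ is asymmetric.

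The main obstacle is the borderline case in which $G$ \emph{does} contain a vertex $u$ of degree $n-1$. By Lemma~\ref{obs 01} such a $u$ is unique, but it is not excluded, and in that event $u$ is a dominating vertex of $G$ and remains dominating in $G+K_{1}$, so both $u$ and $w$ attain the maximum degree $n$ and the uniqueness argument collapses. I expect the whole difficulty to concentrate here: one must rule out an automorphism interchanging $u$ and $w$. Accordingly, I would either invoke the standing hypothesis that $G$ has no vertex of degree $n-1$ (so this case never arises), or try to show that the presence of a dominating vertex in an asymmetric $G$ already rigidifies $G-u$ enough to prevent the swap. Identifying exactly which extra condition on $G$ is needed to force $w$ to be the sole vertex of degree $n$ is the crux, and the delicate point, of the argument.
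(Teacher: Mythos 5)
Your ``clean case'' ($G$ has no vertex of degree $n-1$) is handled correctly, and it uses the same ingredients as the paper's proof: degree counting to pin down the apex $w$, plus the observation that an automorphism fixing $w$ restricts to an automorphism of $G$, hence is the identity. The paper organizes this as a proof by contradiction with two cases, dispatching the case of a similar pair inside $V(G)$ by asymmetry of $G$ and the case $w\sim b$, $b\in V(G)$, by an appeal to Lemma~\ref{obs 01}.

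However, the crux you flag is not a gap you failed to close; it is a point where the lemma itself is false, and where the paper's own proof is invalid. If $G$ has a (necessarily unique, by Lemma~\ref{obs 01}) vertex $u$ with $deg_G(u)=n-1$, then in $G+K_1$ both $u$ and $w$ are adjacent to every other vertex, so the transposition exchanging $u$ and $w$ and fixing all remaining vertices is a non-trivial automorphism of $G+K_1$: every edge at $u$ maps to an edge at $w$ and conversely, since both are dominating vertices. No amount of rigidity of $G-u$ can prevent this swap. Such $G$ exist: take any asymmetric $H$ with $\Delta(H)\leq |V(H)|-2$ (for instance the $6$-vertex asymmetric graph consisting of a triangle with pendant paths of lengths $1$ and $2$); then $G=H+K_1$ is asymmetric by your clean-case argument and has a dominating vertex, so $(H+K_1)+K_1$ is not asymmetric. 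The paper's Case~$2$ claims the equality $deg_{G+K_1}(v)=n=deg_{G+K_1}(b)$ is ``not possible by Lemma~\ref{obs 01}'', but that lemma only forbids \emph{two or more} vertices of degree $n-1$ in an asymmetric graph on $n$ vertices; here $G$ has just one such vertex, which is perfectly consistent, and applying the lemma to $G+K_1$ instead would presuppose the very asymmetry being proven. So the honest conclusion is the conditional statement your argument actually establishes: $G+K_1$ is asymmetric whenever $G$ is asymmetric and $\Delta(G)\leq n-2$, and the extra hypothesis cannot be dropped.
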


\begin{proof}Let $V(G+K_{1})=V(G)\cup \{v\}$. Suppose contrarily that $G+K_{1}$ is not asymmetric then
there exist at least two distinct vertices $a,b\in V(G+K_{1})$ such
that $a\in \mathcal{O}(b)$. Now there are two cases:
\\Case $1$: $a,b \in V(G)$. Then $a\in \mathcal{O}(b)$ in $G$ leads to a
contradiction.
\\Case $2$: $a=v$ and $b\in V(G)$. Then
$deg_{G+K1}(v)=n=deg_{G+K1}(b)$, which is not possible by Lemma
\ref{obs 01}.
\end{proof}

\begin{thm}\label{thm 03}
Let $G$ be a graph. Then there exists a fixing set $\mathcal{F}$ for
$G + K_1$ such that $\mathcal{F}\subseteq V(G)$.
\end{thm}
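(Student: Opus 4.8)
The plan is to use the fact that the apex vertex $v$ of $G+K_{1}$ is the unique vertex lying outside $V(G)$, so that once the whole of $V(G)$ is labelled there is no room left for $v$ to move. Indeed, if an automorphism $g$ of $G+K_{1}$ fixes every vertex of $V(G)$, then since $g$ is a bijection of $V(G)\cup\{v\}$ it must send the one remaining vertex $v$ to itself, whence $g$ is the identity. Thus $\mathcal{F}=V(G)$ is already a fixing set of $G+K_{1}$ contained in $V(G)$, which proves the statement outright. The genuinely interesting point, and where I would spend the effort, is that such a fixing set can be found without ever recruiting $v$ even when one economises, since this is what is needed for the corona computations that follow.

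To economise I would first record the degrees: $\deg_{G+K_{1}}(v)=n$, while $\deg_{G+K_{1}}(w)=\deg_{G}(w)+1\le n$ for each $w\in V(G)$, with equality exactly when $\deg_{G}(w)=n-1$. Let $D$ be the set of such vertices of $G$ and put $U=\{v\}\cup D$; these are precisely the vertices of $G+K_{1}$ of maximum degree $n$, and every automorphism permutes $U$ setwise because automorphisms preserve degree. If $D=\varnothing$, then $v$ is the unique vertex of degree $n$ and is therefore fixed by every $g\in\Gamma(G+K_{1})$; restriction to $V(G)$ then identifies $\Gamma(G+K_{1})$ with $\Gamma(G)$, since the only edges meeting $v$ are the join edges and adjacencies inside $V(G)$ are exactly those of $G$. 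In this case any fixing set of $G$, read inside $V(G)$, is already a fixing set of $G+K_{1}$.

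The substantive case is $D\ne\varnothing$, where $v$ is not forced on its own: for each $w\in D$ both $v$ and $w$ are adjacent to all other vertices, so the transposition $(v\;w)$ is a nontrivial automorphism of $G+K_{1}$ (this is the mechanism behind Lemma \ref{obs 01}). Here the plan is to place all of $D$ into the fixing set. Once every vertex of $D$ is labelled, $v$ is the only surviving member of $U$, so any automorphism fixing $D$ must also fix $v$; such an automorphism then restricts to an automorphism of $G$ fixing $D$ pointwise. I would finish by adjoining to $D$ a fixing set of $G$, obtaining a set $\mathcal{F}\subseteq V(G)$ that kills every automorphism of $G+K_{1}$.

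The step I expect to require the most care is the case $D\ne\varnothing$: one must verify that adjoining $D$ genuinely forces $v$, which rests on $U$ being invariant under $\Gamma(G+K_{1})$ together with $U\setminus D=\{v\}$, and that the residual automorphisms (those fixing $U$ pointwise) are exactly the automorphisms of $G$ fixing $D$, so that a fixing set of $G$ indeed completes the argument. Should this bookkeeping prove delicate, the choice $\mathcal{F}=V(G)$ from the first paragraph remains available as a clean, if uneconomical, witness for the statement as written.
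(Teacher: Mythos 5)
Your proposal is correct, and your first paragraph alone already proves the theorem as stated: since the statement only asks for \emph{some} fixing set contained in $V(G)$, the witness $\mathcal{F}=V(G)$ suffices, because an automorphism fixing all of $V(G)$ pointwise must send the unique remaining vertex $v$ to itself. This is, however, a genuinely different route from the paper's. The paper proceeds by an exchange argument: it takes an arbitrary fixing set $\mathcal{F}$ of $G+K_1$ and, when $v\in\mathcal{F}$, swaps $v$ for a vertex $x\in V(G)$ similar to $v$, thereby producing a fixing set inside $V(G)$ \emph{of the same cardinality}. That size preservation is the real content: it is what lets the theorem be applied to minimum fixing sets later (Lemma \ref{lma4}(2) and Theorem \ref{lma5} need that a minimum fixing set of the corona can be taken to avoid the $G_1$-vertices). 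Your constructions carry no such size control: $V(G)$ is far from minimum in general, and you do not show that $D\cup\mathcal{F}_G$ (your set $D$ of degree-$(n-1)$ vertices of $G$ together with a fixing set $\mathcal{F}_G$ of $G$) achieves $fix(G+K_1)$ --- although your degree analysis, namely that $U=D\cup\{v\}$ is exactly the set of vertices of maximum degree $n$ and hence setwise invariant, so fixing $D$ forces $v$, is sound and is the same mechanism behind Lemma \ref{obs 01}. In exchange, your case split is actually more careful than the paper's own proof: when $v$ is a fixed vertex (e.g.\ when $D=\emptyset$) there is \emph{no} $x\sim v$ at all, contrary to the paper's assertion that one exists (there one should simply delete $v$), and the paper never verifies that its swap preserves the fixing property, which can fail for a bad choice of $x$: in $K_4=K_3+K_1$ with $\mathcal{F}=\{a,b,v\}$, exchanging $v$ for $a$ yields $\{a,b\}$, which is not a fixing set. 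So your proof fully establishes the theorem as written, while the paper's argument aims at the stronger, cardinality-preserving claim that the rest of the paper implicitly uses, and would need precisely the kind of bookkeeping you outline to be watertight.
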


\begin{proof} Let $V(G + K_1) = V(G)\cup\{v\}$ and $\mathcal{F}$ be a
fixing set for $G + K_1$. If $v\not\in \mathcal{F}$, then nothing to
prove. Let $v\in \mathcal{F}$. Since $\mathcal{F}$ is a fixing set,
there exists $x\in V(G)$ such that $x \sim v$. Then
$(\mathcal{F}\setminus\{v\}) \cup \{x\}\subseteq V(G)$ is a fixing
set for $G + K_1$.
\end{proof}
\begin{lem}\label{lma3}
 Let $G_1$ be a connected graph and $G_2$
be an arbitrary graph. Then for two distinct vertices $u_{1}, u_{2}
\in V(G_1)$, $v_{j}^{2} \notin \mathcal{F}(C_{1})$ for all
$v_{j}^{2}\in V_{2}$.
\end{lem}
\begin{proof}
Let $v_{p}^{1},v_{q}^{1}\in V_{1}$ such that $u_{p}^{1}\in
\mathcal{O}(v_{q}^{1})$. Since for any $v_{j}^{2}\in V_{2}$,
$d(v_{p}^{1},v_{j}^{2})=d(v_{q}^{1},v_{j}^{2})$. Then by Lemma
\ref{lem1}, any vertex of $C_{1}$ is not fixed by $v_{j}^{2}$.
\end{proof}
For $v_{s}^{i},v_{t}^{i}\in V_{i}$ such that $v_{s}^{i}\sim
v_{t}^{i}$, $d(v_{s}^{i},v_{j}^{r})=d(v_{t}^{i},v_{j}^{r})$ for any
$v_{j}^{r}\in V_{j}$ so by Lemma \ref{lem1}, we have the following
proposition:
\begin{prop}\label{prp2}
Let $G_1=(V,E)$ be a connected graph and $G_2$ be an arbitrary
graph. Let $C_i=(V_i,E_i)$ be the subgraph of $G_1\odot G_2$
corresponding to the $i^{th}$-copy of $G_2$. If
$v_{s}^{i},v_{t}^{i}\in V_i$, then $fix(v_{s}^{i},v_{t}^{i})\cap
V_i\neq \emptyset$ and $fix(v_{s}^{i},v_{t}^{i})\cap V_j =
\emptyset$ for $i\neq j$.
\end{prop}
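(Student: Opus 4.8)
The plan is to treat the two assertions separately: the emptiness of the intersection with a foreign copy $V_j$ ($j\neq i$) carries the real content, and I would prove it first via a distance computation plus Lemma \ref{lem1}, while the non-emptiness of the intersection with $V_i$ reduces to a single line. Throughout I use the standing hypothesis, implicit in the preceding paragraph, that $v_s^i\sim v_t^i$, since the relative fixing set $fix(v_s^i,v_t^i)$ is only defined for a pair of similar vertices; I fix once and for all an automorphism $\alpha$ with $\alpha(v_s^i)=v_t^i$.

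First I would pin down the distances in $G_1\odot G_2$. By construction the $i$-th copy $C_i$ is attached to the rest of the graph only through its hub $u_i$, and every vertex of a copy is adjacent to its hub; consequently any path from a vertex of $V_i$ to a vertex $z\in V_j$ with $j\neq i$ is forced to leave $C_i$ through $u_i$ and to enter $C_j$ through $u_j$. This gives, for every such $z$,
\[
d(v_s^i,z)=1+d_{G_1}(u_i,u_j)+1=d(v_t^i,z),
\]
a value independent both of the chosen endpoint in $V_i$ and of the chosen $z\in V_j$. Combining the equality $d(v_s^i,z)=d(v_t^i,z)$ with the existence of $\alpha$, Lemma \ref{lem1} yields $z\notin fix(v_s^i,v_t^i)$; since $z$ was an arbitrary vertex of $V_j$, we obtain $fix(v_s^i,v_t^i)\cap V_j=\emptyset$ for every $j\neq i$.

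For the first assertion it is enough to produce one vertex of $V_i$ inside $fix(v_s^i,v_t^i)$, and I claim $v_s^i$ itself qualifies. Indeed, every $g\in\Gamma_{v_s^i}(G_1\odot G_2)$ satisfies $g(v_s^i)=v_s^i\neq v_t^i$, and since $g$ is a bijection already pinning $v_s^i$ it cannot also map $v_t^i$ onto $v_s^i$, so $g(v_t^i)\neq v_s^i$; both conditions in the definition of the relative fixing set hold, whence $v_s^i\in fix(v_s^i,v_t^i)\cap V_i$ and this intersection is non-empty.

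The only step demanding genuine care is the distance formula underpinning the second assertion: one must check that no inter-copy shortcut can bypass the two hubs, so that $v_s^i$ and $v_t^i$ are truly equidistant from every vertex lying outside $C_i$. Once that is secured the conclusion is a direct application of Lemma \ref{lem1}, and the first assertion follows immediately from the definition of $fix(v_s^i,v_t^i)$, so I expect no further obstacle.
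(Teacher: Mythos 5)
Your proof is correct and follows essentially the same route as the paper: the paper's (one-line) justification given just before the proposition is precisely your key step, namely that two similar vertices of $C_i$ are equidistant from every vertex of $V_j$ (all paths passing through the hubs $u_i$ and $u_j$), combined with Lemma \ref{lem1}. The only difference is that you additionally spell out the distance formula and the trivial verification that $v_s^i$ itself lies in $fix(v_s^i,v_t^i)$, giving the non-emptiness claim that the paper states but leaves implicit.
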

\begin{lem}\label{lma4}
Let $G_1=(V,E)$ be a connected graph and $G_2$ be an arbitrary graph
with $|V(G_1)|=m$ and $|V(G_2)|=n$. Let $C_i=(V_i,E_i)$ be the
subgraph of $G_1\odot G_2$ corresponding to the $i^{th}$-copy of
$G_2$.
\\
\ (1)\ \ \ \ If $\mathcal{F}$ is a minimum fixing set of $G_1\odot
G_2$, then $\mathcal{F}\cap V_i \neq \emptyset$ for every \par \ \ \
\ \ $i \in \{1,2,3,...,m\}$.
\\
(2)\ \ \ \ If $\mathcal{F}$ is a minimum fixing set of $G_1\odot
G_2$, then $\mathcal{F}\cap V = \emptyset$.
\\
(3)\ \ \ \ If $\mathcal{F}$ is a fixing set of $G_1\odot G_2$ then
$\mathcal{F}\cap V_i$ is a fixing set for $C_i$ for every \par \ \ \
\ \ $i\in \{1,2,3,...,m\}$.
\end{lem}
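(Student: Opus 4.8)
The common engine for all three parts is a \emph{separation property} of $\Gamma(G_1\odot G_2)$, which I would establish first. Since $G_1$ is connected and non-trivial, $deg_{G_1}(u_i)\geq 1$, so every vertex $u_i\in V$ has degree $deg_{G_1}(u_i)+n\geq n+1$ in $G_1\odot G_2$, whereas every copy-vertex $v_j^k\in V_k$ has degree $deg_{G_2}(v_j)+1\leq n$. Because automorphisms preserve degrees, every $\phi\in\Gamma(G_1\odot G_2)$ maps $V$ onto $V$ and $\bigcup_k V_k$ onto itself. Moreover, the copy-neighbours of $u_i$ are exactly the $n$ vertices of $V_i$, so if $\phi(u_i)=u_{\sigma(i)}$ then $\phi(V_i)=V_{\sigma(i)}$; that is, $\phi$ permutes the copies as blocks via a permutation $\sigma$. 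Equivalently, $u_i$ is the \emph{unique} neighbour of any $v_j^i$ whose degree is at least $n+1$. I will use both formulations repeatedly.

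For (1), since $G_2$ is not asymmetric (the standing assumption), each copy $C_i$ contains a pair of similar vertices $v_s^i\sim v_t^i$. By Proposition~\ref{prp2}, $fix(v_s^i,v_t^i)\cap V_j=\emptyset$ for every $j\neq i$. It remains to rule out the $G_1$-vertices: for any $u_k\in V$ one has $d(v_s^i,u_k)=d(v_t^i,u_k)$ (both equal $1$ when $k=i$ and $1+d_{G_1}(u_i,u_k)$ otherwise), so Lemma~\ref{lem1} gives $u_k\notin fix(v_s^i,v_t^i)$. Hence $fix(v_s^i,v_t^i)\subseteq V_i$, and since a fixing set must fix this similar pair, $\mathcal{F}\cap V_i\neq\emptyset$.

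For (2), suppose a minimum fixing set $\mathcal{F}$ contains some $u_i\in V$; I would show $\mathcal{F}\setminus\{u_i\}$ is still a fixing set, contradicting minimality. Let $\phi$ fix every vertex of $\mathcal{F}\setminus\{u_i\}$. By part (1) there is a vertex $v_j^i\in\mathcal{F}\cap V_i$, necessarily distinct from $u_i$, so $\phi(v_j^i)=v_j^i$. By the separation property $u_i$ is the unique neighbour of $v_j^i$ of degree $\geq n+1$, and $\phi$ must send it to the unique such neighbour of $\phi(v_j^i)=v_j^i$; hence $\phi(u_i)=u_i$. Thus $\phi$ fixes all of $\mathcal{F}$ and equals the identity, so $u_i$ was redundant, contradicting minimality and giving $\mathcal{F}\cap V=\emptyset$.

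For (3), assume $\mathcal{F}\cap V_i$ fails to be a fixing set of $C_i$, so some nontrivial $\psi\in\Gamma(C_i)$ fixes $\mathcal{F}\cap V_i$ pointwise. Define $\Phi$ on $G_1\odot G_2$ by $\Phi=\psi$ on $V_i$ and $\Phi=\mathrm{id}$ elsewhere. It is routine to check $\Phi\in\Gamma(G_1\odot G_2)$: edges inside $C_i$ are preserved because $\psi\in\Gamma(C_i)$, the star edges $u_iv_j^i$ are preserved because $u_i$ is fixed and adjacent to all of $V_i$, and every remaining edge has both endpoints fixed. Now $\Phi$ fixes $\mathcal{F}$ (identity on $\mathcal{F}\cap V$ and on $\mathcal{F}\cap V_k$ for $k\neq i$, while $\psi$ fixes $\mathcal{F}\cap V_i$) yet $\Phi\neq e$, contradicting that $\mathcal{F}$ is a fixing set. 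The main obstacle throughout is pinning down the separation property, namely that no automorphism can interchange a $G_1$-vertex with a copy-vertex and that the copies move as rigid blocks; once this block structure is in hand, (1) reduces to the distance lemmas and (2)--(3) to short replacement and extension arguments.
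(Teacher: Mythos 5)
Your proposal is correct, and it is worth recording where it departs from the paper's own proof. For part (1) you follow essentially the paper's route (Lemma \ref{lem1} and Proposition \ref{prp2} confine the vertices that can break a similar pair of $C_i$ to $V_i$), except that you also dispose of the $G_1$-vertices explicitly, a case the paper silently skips. For part (3) the idea is the same, but where the paper appeals to Lemma \ref{lma3} and orbits, you build the extension automorphism $\Phi$ ($\psi$ on $V_i$, identity elsewhere) explicitly; this is the more solid version, and in fact that extension map is what really justifies the last step of part (1) as well --- the bare principle ``a fixing set must contain a vertex of $fix(u,v)$ for every similar pair $(u,v)$,'' which both you and the paper lean on, is not valid in general (different vertices of $\mathcal{F}$ may each admit an automorphism moving $u$ to $v$ while no single nontrivial automorphism fixes all of $\mathcal{F}$), whereas here the pair $v_s^i \sim v_t^i$ is realized by an automorphism that is the identity off $V_i$, so a fixing set disjoint from $V_i$ cannot destroy it. Part (2) is where you genuinely diverge: the paper deduces it from Theorem \ref{thm 03}, constructing for each cone $\{v_i\}+G_2$ a fixing set inside the copy and taking the union --- an argument that produces \emph{some} fixing set avoiding $V$ but never shows that the \emph{given} minimum fixing set $\mathcal{F}$ satisfies $\mathcal{F}\cap V=\emptyset$. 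Your degree-based separation property (every $u_i$ has degree at least $n+1$, every copy-vertex degree at most $n$, hence automorphisms preserve $V$ and move the copies as blocks) makes $u_i$ recoverable from any fixed vertex of $V_i$, so by part (1) any $G_1$-vertex in $\mathcal{F}$ is redundant, contradicting minimality. This replacement argument is complete where the paper's is not, and the separation property --- tacitly assumed throughout the paper but nowhere proved there --- is a genuine strengthening; note only that it requires $G_1$ to be nontrivial (so $deg_{G_1}(u_i)\geq 1$), which is covered by the paper's standing assumptions.
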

\begin{proof}
(1)Suppose $\mathcal{F}\cap V_i = \emptyset$, then by Lemma
\ref{lem1}, no vertex of $V_i$ will be fixed by any vertex of $V_j$,
where $i\neq j$. Since no $C_i$ is asymmetric so there exists at
least two vertices $u,v \in V_i$ such that $u\in \mathcal{O}(v)$ and
$v \in \mathcal{O}(u)$, which is a contradiction to the fact that
$\mathcal{F}$ is a fixing set. So, $\mathcal{F}\cap V_i\neq
\emptyset$.
\\
(2) We will show that $\mathcal{F}'=\mathcal{F}-V$ is a fixing set
for $G_1\odot G_2$. By using Theorem \ref{thm 03}, for every vertex
$v_i\in V(G_1)$, there exists a fixing set $\mathcal{F}_i\subseteq
V(G_{2})$ for $\{v_{i}\}+G_{2}$ for each $i\in\{1,2,3,...,m\}$.
Also, $\mathcal{F}=\cup \mathcal{F}_i$ is a fixing set for $G_1\odot
G_2$. Hence, the result.
\\
(3) Assume contrary that $\mathcal{F}\cap V_i$ is not a fixing set
for $C_i$ for some $i$, then there exists $u, v\in V_i$ such that
$u\in \mathcal{O}(v)$ and $u, v\notin \mathcal{F}\cap V_i$, then by
using Lemma \ref{lma3}, $\mathcal{F}$ is not the fixing set for
$G_1\odot G_2$, a contradiction.
\end{proof}
Now we give the fixing number of corona product of a connected and
an arbitrary graph that may or may not be connected and no graph is
asymmetric.
\begin{thm}\label{lma5}
\par Let $G_1$ be a connected graph and $G_2$ be an arbitrary graph with $|V(G_1)|=m$ and
$|V(G_2)|=n$. Then $fix(G_1\odot G_2)= m fix(G_2)$.
\end{thm}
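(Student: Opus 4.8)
The plan is to prove the equality by establishing the two inequalities $fix(G_1\odot G_2)\ge m\,fix(G_2)$ and $fix(G_1\odot G_2)\le m\,fix(G_2)$ separately. Before doing so I would record the structural fact that every automorphism of $G_1\odot G_2$ permutes the copies $C_1,\dots,C_m$ among themselves. This follows from a degree count: since $G_1$ is connected and non-trivial, each hub $u_i\in V$ has $deg_{G_1\odot G_2}(u_i)=deg_{G_1}(u_i)+n\ge n+1$, whereas each copy vertex $v_j^i\in V_i$ has $deg_{G_1\odot G_2}(v_j^i)=deg_{G_2}(v_j^i)+1\le n$. Hence no automorphism can send a hub to a copy vertex, so the hub set $V$ is invariant and the induced action on $V$ is an automorphism $\sigma$ of $G_1$; moreover, as $V_i$ is exactly the set of copy-neighbours of $u_i$, any automorphism $\phi$ with $\phi(u_i)=u_{\sigma(i)}$ must carry $V_i$ onto $V_{\sigma(i)}$.

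For the lower bound I would take a minimum fixing set $\mathcal{F}$ of $G_1\odot G_2$. By Lemma \ref{lma4}(2) we have $\mathcal{F}\cap V=\emptyset$, so $\mathcal{F}$ is the disjoint union of the sets $\mathcal{F}\cap V_i$. By Lemma \ref{lma4}(3) each $\mathcal{F}\cap V_i$ is a fixing set of $C_i$, and since $C_i\cong G_2$ this forces $|\mathcal{F}\cap V_i|\ge fix(G_2)$. Summing over the $m$ copies gives $fix(G_1\odot G_2)=|\mathcal{F}|\ge m\,fix(G_2)$.

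For the upper bound I would exhibit a fixing set of the required size. For each $i$ choose a minimum fixing set $S_i\subseteq V_i$ of the copy $C_i\cong G_2$, so $|S_i|=fix(G_2)$, and put $\mathcal{F}=\bigcup_{i=1}^{m}S_i$, a set of cardinality $m\,fix(G_2)$. To check that $\mathcal{F}$ is a fixing set, let $\phi$ fix $\mathcal{F}$ pointwise. Because $G_2$ is not asymmetric, each $S_i$ is non-empty, so $\phi$ fixes a vertex of every $V_i$; combined with $\phi(V_i)=V_{\sigma(i)}$ and the disjointness of the copies this forces $\sigma(i)=i$ for all $i$, i.e. $\phi$ fixes every hub and maps each $C_i$ onto itself. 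The restriction $\phi|_{C_i}$ is then an automorphism of $C_i$ fixing $S_i$, hence the identity; since this holds for every $i$ and every hub is fixed, $\phi$ is the identity. Thus $\mathcal{F}$ is a fixing set and $fix(G_1\odot G_2)\le m\,fix(G_2)$, which together with the lower bound yields the claim.

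I expect the main obstacle to be the upper-bound verification, and within it the structural fact that automorphisms cannot shuffle the copies once one vertex in each copy has been pinned down. The degree inequality $deg(u_i)\ge n+1>n\ge deg(v_j^i)$, which crucially uses the connectivity (hence non-triviality) of $G_1$, is what makes hubs automorphism-invariant and thereby legitimises the copy-by-copy reduction; once this is in place the lower bound is immediate from Lemma \ref{lma4} and the upper bound is routine bookkeeping. It is also worth noting that the hypothesis that $G_2$ is not asymmetric enters precisely here, guaranteeing $S_i\neq\emptyset$, without which $G_1$-automorphisms could permute the copies freely.
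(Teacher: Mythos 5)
Your proof is correct, and its core — the copy-by-copy decomposition via Lemma \ref{lma4} — is the same as the paper's; the difference lies in how the two inequalities are handled. The paper takes a minimum fixing set $\mathcal{F}$, writes $\mathcal{F}=\bigcup_{i=1}^{m}\mathcal{F}_i$ with $\mathcal{F}_i=\mathcal{F}\cap V_i$ a fixing set of $C_i$ by Lemma \ref{lma4}(1)--(3), and then simply writes $|\mathcal{F}_i|=fix(G_2)$; strictly, Lemma \ref{lma4}(3) gives only $|\mathcal{F}_i|\geq fix(G_2)$ (your lower bound), and the reverse inequality — that the union of minimum fixing sets of the copies really fixes all of $G_1\odot G_2$ — is left to an assertion inside the proof of Lemma \ref{lma4}(2), which leans on Theorem \ref{thm 03}. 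You instead prove that upper bound directly, and the structural fact you use to do it, namely the degree count $deg(u_i)=deg_{G_1}(u_i)+n\geq n+1>n\geq deg_{G_2}(v_j^i)+1=deg(v_j^i)$ forcing every automorphism to preserve the hub set $V$, act on it as an automorphism of $G_1$, and permute the copies accordingly, is nowhere made explicit in the paper. This buys a self-contained verification that pinning a nonempty $S_i$ in each copy freezes the copy permutation and then kills each restriction $\phi|_{C_i}$, so your write-up is, if anything, more complete than the published proof, at the cost of one extra structural observation. The only hypotheses worth flagging explicitly are $m\geq 2$ and $\Gamma(G_2)\neq\{e\}$ — both covered by the paper's blanket conventions — since the degree argument and the non-emptiness of the $S_i$ depend on them exactly as you note.
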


\begin{proof}
\par Let $\mathcal{F}$ be a fixing set of $G_1\odot G_2$. From Lemma \ref{lma4}
 $(2)$, we see that $\mathcal{F}\cap V = \emptyset$. Moreover,
from Lemma \ref{lma4} $(1)$, for every $i\in \{1,2,3,...,m\}$, there
exist a non empty set $\mathcal{F}_i  \subset V_i$ such that
$\mathcal{F}=\bigcup \limits_{i=1}^{m} \mathcal{F}_i$, and by Lemma
\ref{lma4} $(3)$, $\mathcal{F}_i$ is a fixing set for $C_{i}$. So,
$fix(G_1\odot G_2)=|\mathcal{F}|= \sum
\limits_{i=1}^{m}|\mathcal{F}_i|= \sum\limits_{i=1}^{m} fix(G_2)=m
fix(G_2)$.
\end{proof}
Since $G_1\odot ^k G_2=(G_1\odot ^{k-1}G_{2})\odot G_{2}$ for any
positive integer $k$ $i.e$ we join a copy of $G_{2}$ at each vertex
in $G_1\odot ^{k-1} G_2$ and $|G_1\odot ^{k-1} G_2|=m(n+1)^{k-1}$.
Therefore we have $m(n+1)^{k-1}$ new copies in $k$-th corona and it
suffices to fix these copies in order to fix the graph $G_1\odot ^k
G_2$. Therefore we have the following corollary:
\begin{cor}\label{thm1}
\par Let $G_1$ be a connected graph and $G_2$ be an arbitrary graph with $|V(G_1)|=m$ and
$|V(G_2)|=n$. Then $fix(G_1\odot ^k G_2)= m(n+1)^{k-1}fix(G_2)$.
\end{cor}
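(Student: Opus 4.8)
The plan is to prove the statement by induction on $k$, using Theorem \ref{lma5} as the engine. The base case $k=1$ reads $fix(G_1\odot^1 G_2)=fix(G_1\odot G_2)=m\,fix(G_2)=m(n+1)^{0}fix(G_2)$, which is precisely the content of Theorem \ref{lma5}. So it remains to carry out the inductive step, and in fact the step is little more than a single invocation of Theorem \ref{lma5} applied to the iterated graph.

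For the inductive step I would exploit the recursive identity $G_1\odot^k G_2=(G_1\odot^{k-1}G_2)\odot G_2$. The crucial point to verify before invoking Theorem \ref{lma5} is that the first factor $G_1\odot^{k-1}G_2$ is connected, since that theorem requires a connected graph in the first coordinate. This follows by a short auxiliary induction: the corona product of a connected graph with any graph is connected, because every vertex of each attached copy of $G_2$ is joined by an edge to a vertex of the (connected) first factor; hence if $G_1\odot^{k-1}G_2$ is connected then so is $G_1\odot^k G_2$, with the base case being the connectivity of $G_1$ itself. I would also record that the order of the first factor is $m(n+1)^{k-1}$, which is the order formula already noted for iterated coronas.

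With these two facts in hand, the step is immediate: applying Theorem \ref{lma5} to the connected graph $G_1\odot^{k-1}G_2$ of order $m(n+1)^{k-1}$ and the arbitrary graph $G_2$ gives
$$fix\big((G_1\odot^{k-1}G_2)\odot G_2\big)=m(n+1)^{k-1}fix(G_2),$$
and the left-hand side equals $fix(G_1\odot^k G_2)$ by the recursive identity. Note that the same fixed graph $G_2$ is attached at every stage, so $fix(G_2)$ is a constant throughout and no recomputation of the inner fixing number is required. The main (and essentially only) obstacle is the verification that the iterated corona remains connected, so that the hypothesis of Theorem \ref{lma5} is legitimately met; everything else is bookkeeping with the order count $m(n+1)^{k-1}$.
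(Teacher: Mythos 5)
Your proposal is correct and takes essentially the same route as the paper: the paper's justification is exactly the recursive identity $G_1\odot^k G_2=(G_1\odot^{k-1}G_2)\odot G_2$ together with one application of Theorem \ref{lma5} to the first factor, whose order is $m(n+1)^{k-1}$. Your explicit induction and your verification that the iterated corona stays connected merely make precise what the paper leaves implicit.
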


\begin{lem}\label{lma6}
If $G_1$ and $G_2$ are asymmetric graphs, then $G_1\odot G_2$ is
also asymmetric.
\end{lem}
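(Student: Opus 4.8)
The plan is to show that every $\alpha\in\Gamma(G_1\odot G_2)$ is the identity. I would work with the root vertices $u_1,\dots,u_m$ (the copy of $G_1$) and the copies $C_1,\dots,C_m$, exploiting the degree formulas $\deg(u_i)=\deg_{G_1}(u_i)+n$ and $\deg(v_j^i)=\deg_{G_2}(v_j)+1$. Since $\Delta(G_2)\le n-1$, every copy vertex has degree at most $n$, while every root has degree at least $n$; thus a vertex of degree $>n$ is necessarily a root and a vertex of degree $<n$ is necessarily a copy vertex. The only ambiguity is at degree exactly $n$, which occurs for a root only when it is isolated in $G_1$, and for a copy vertex only when the corresponding vertex is universal in $G_2$.

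First I would dispose of the isolated-root ambiguity by passing to connected components. Components of $G_1\odot G_2$ correspond to components of $G_1$, and a component of $G_1$ on $p$ vertices yields a corona-component on $p(n+1)$ vertices. Because $G_1$ is asymmetric it has at most one isolated vertex (two isolated vertices would be similar), so at most one corona-component is of the form $G_2+K_1$ (on $n+1$ vertices), and every other component has strictly larger order $p(n+1)\ge 2(n+1)$. Hence $\alpha$ must fix this component setwise; since $G_2+K_1$ is asymmetric (by the lemma that $G+K_1$ is asymmetric whenever $G$ is), $\alpha$ restricts to the identity on it. In every remaining component the underlying $G_1$-component is connected on $\ge 2$ vertices, so each of its roots is non-isolated and therefore has degree $\ge n+1>n$, while all its copy vertices still have degree $\le n$; in particular a universal copy vertex of degree $n$ cannot be sent to a root. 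Thus, across all these components, $\alpha$ carries roots to roots and copy vertices to copy vertices, and combining the two cases gives $\alpha(V(G_1))=V(G_1)$ with $\alpha$ permuting the copies.

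Next I would push this structure back onto $G_1$ and $G_2$. Two roots are adjacent in $G_1\odot G_2$ precisely when they are adjacent in $G_1$, so the permutation that $\alpha$ induces on $\{u_1,\dots,u_m\}$ is an automorphism of $G_1$; as $G_1$ is asymmetric this is the identity, giving $\alpha(u_i)=u_i$ for every $i$. Since $u_i$ is the unique root adjacent to any vertex of $V_i$, it follows that $\alpha(V_i)=V_i$. Finally, $V_i$ induces a copy of $G_2$ and $\alpha$ fixes $u_i$, so $\alpha|_{V_i}$ is an automorphism of $G_2$; asymmetry of $G_2$ forces it to be the identity. Hence $\alpha$ fixes every vertex, and $G_1\odot G_2$ is asymmetric.

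The main obstacle is exactly the degree-$n$ ambiguity in the first step, namely preventing $\alpha$ from interchanging an isolated root with a universal copy vertex. Bare degree counting does not resolve this; what neutralizes it is the component argument together with the asymmetry of $G_2+K_1$ and the observation that $G_1$ can have at most one isolated vertex. This is the one place where the preceding lemmas, rather than degree sequences alone, are essential, and I would be careful to verify that the $G_2+K_1$ component is genuinely the unique component of its order so that $\alpha$ cannot move it.
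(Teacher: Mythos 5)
Your proof is correct, and it is substantially more rigorous than the one in the paper. The paper's proof is a three-sentence sketch: it notes that each copy $C_i\cong G_2$ is asymmetric, that $C_i\cong C_j$, and then simply asserts that, because $G_1$ is asymmetric, $i\not\in\mathcal{O}(j)$ in $G_1\odot G_2$ for distinct roots $i,j$ — concluding asymmetry of the corona. It never addresses the possibility that an automorphism might carry a root to a copy vertex, which is exactly the point on which that assertion depends; without knowing that automorphisms preserve the root/copy-vertex partition, the asymmetry of $G_1$ does not transfer to the roots of the corona, nor does the asymmetry of $G_2$ force the identity on each $V_i$. Your degree analysis ($\deg(u_i)=\deg_{G_1}(u_i)+n\geq n$ versus $\deg(v_j^i)=\deg_{G_2}(v_j)+1\leq n$), together with the component argument resolving the degree-$n$ collision between an isolated root and a universal copy vertex (via the uniqueness of the order-$(n+1)$ component, the paper's lemma that $G+K_1$ is asymmetric when $G$ is, and the observation that an asymmetric $G_1$ has at most one isolated vertex), supplies precisely the missing step. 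In short: both arguments share the same skeleton — asymmetry of the copies plus asymmetry of $G_1$ — but your proposal proves the structural fact that the paper only presupposes, and is the version one would want in a careful write-up.
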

\begin{proof} Note that, the graph induced by $V_{i}$ is $C_{i}$ and $C_{i}\cong G_{2}$, hence $C_{i}$ is asymmetric for each $i=1,2,3,\cdots, m$.
Also, note that, $C_{i}\cong C_{j}$ for any $i,j \in V(G_{1})$.
Since $G_{1}$ is asymmetric, so $i\not\in \mathcal{O}(j)$ for any
$i,j \in V(G_{1})$ in $G_1\odot G_2$. Hence, $G_1 \odot G_2$ is
asymmetric.
\end{proof}
Now we give the fixing number of corona product of two graphs $G_1$
and $G_2$ that may or may not be asymmetric where $G_1$ is connected
and $G_2$ may or may not be.
\begin{thm} Let $G_1$ and $G_2$ be two arbitrary graphs with $G_{1}$ is connected and $|V(G_1)|=m$ and
$|V(G_2)|=n$. Then $fix(G_1\odot G_2)= max\{fix(G_1), m fix(G_2)\}$.
\end{thm}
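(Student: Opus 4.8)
The plan is to treat the formula as two inequalities and to split on whether $G_2$ is asymmetric, since the quantity $m\,fix(G_2)$ collapses to $0$ exactly when $G_2$ is asymmetric. Throughout I would use the centers $u_1,\dots,u_m$ (the copy of $G_1$) together with the copies $C_1,\dots,C_m$ of $G_2$, and the elementary degree count $\deg_{G_1\odot G_2}(u_i)=n+\deg_{G_1}(u_i)\ge n+1$ against $\deg_{G_1\odot G_2}(v_j^i)=1+\deg_{G_2}(v_j^i)\le n$, valid because $G_1$ is connected with $m\ge 2$. This strict separation forces every automorphism of $G_1\odot G_2$ to map the center set onto itself, hence to induce a permutation $\bar\phi$ of $\{1,\dots,m\}$; since the subgraph induced by the centers is $G_1$, in fact $\bar\phi\in\Gamma(G_1)$, and because $v_j^i$ is adjacent to $u_k$ only for $k=i$, the automorphism sends $C_i$ isomorphically onto $C_{\bar\phi(i)}$.

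For the lower bound I would establish $fix(G_1\odot G_2)\ge\max\{fix(G_1),m\,fix(G_2)\}$ by proving each bound separately. The inequality $fix(G_1\odot G_2)\ge m\,fix(G_2)$ follows from Lemma \ref{lma4}: part $(3)$ says that for any fixing set $\mathcal F$ the set $\mathcal F\cap V_i$ is a fixing set of $C_i\cong G_2$, so $|\mathcal F\cap V_i|\ge fix(G_2)$, and summing over the disjoint $V_i$ gives $|\mathcal F|\ge m\,fix(G_2)$ (this is trivial when $G_2$ is asymmetric). For $fix(G_1\odot G_2)\ge fix(G_1)$ I would take a minimum fixing set $\mathcal F$, project each of its vertices to the index of its home copy (or to its own index if it is a center), and let $\mathcal F'$ be the resulting set of centers, so $|\mathcal F'|\le|\mathcal F|$. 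To check $\mathcal F'$ fixes $G_1$, given $\alpha\in\Gamma(G_1)$ fixing $\mathcal F'$ I lift it to $\hat\alpha\in\Gamma(G_1\odot G_2)$ acting as $\alpha$ on centers and sending $v_j^i\mapsto v_j^{\alpha(i)}$; this $\hat\alpha$ fixes every vertex whose home index lies in $\mathcal F'$, hence fixes all of $\mathcal F$, so $\hat\alpha=\mathrm{id}$ and therefore $\alpha=\mathrm{id}$.

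For the upper bound I would split on $G_2$. If $G_2$ is not asymmetric, then Theorem \ref{lma5} already gives $fix(G_1\odot G_2)=m\,fix(G_2)$, and since $fix(G_2)\ge 1$ forces $m\,fix(G_2)\ge m>m-1\ge fix(G_1)$, the maximum equals $m\,fix(G_2)$ and we are done. If $G_2$ is asymmetric and $G_1$ is also asymmetric, Lemma \ref{lma6} makes $G_1\odot G_2$ asymmetric, so $fix(G_1\odot G_2)=0=\max\{0,0\}$. The remaining case is $G_2$ asymmetric and $G_1$ not, where $\max\{fix(G_1),m\,fix(G_2)\}=fix(G_1)$; here I would take a minimum fixing set $D$ of $G_1$ and claim $\{u_i:i\in D\}$ fixes the corona. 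Indeed, if $\phi$ fixes every $u_i$ with $i\in D$, then $\bar\phi$ fixes $D$, so $\bar\phi=\mathrm{id}$; then $\phi$ maps each $C_i$ to itself, and because $G_2$ is asymmetric the only isomorphism $C_i\to C_i$ is the identity, whence $\phi=\mathrm{id}$. This gives $fix(G_1\odot G_2)\le|D|=fix(G_1)$.

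The main obstacle is the structural step in the asymmetric case: pinning down that an automorphism of $G_1\odot G_2$ is completely determined by its action on the centers. The degree separation disposes of ``centers map to centers'' cleanly, but the essential leverage is the uniqueness of the isomorphism between two copies of an asymmetric $G_2$ (any two such isomorphisms differ by an element of $\Gamma(G_2)=\{\mathrm{id}\}$); this is what upgrades $\bar\phi=\mathrm{id}$ to $\phi=\mathrm{id}$, and makes both the center-lift $\hat\alpha$ well defined and the upper bound tight. Some care is also needed to confirm that the degree inequality uses only $m\ge 2$ and the connectivity of $G_1$, so that no copy vertex can ever be similar to a center.
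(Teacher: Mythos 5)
Your proposal is correct, and while it leans on the same key ingredients as the paper (Theorem \ref{lma5} when $G_2$ is not asymmetric, Lemma \ref{lma6} when both factors are asymmetric), it is organized along a genuinely different line: you prove the two lower bounds $fix(G_1\odot G_2)\geq m\,fix(G_2)$ (from Lemma \ref{lma4}(3), summing over the disjoint sets $V_i$) and $fix(G_1\odot G_2)\geq fix(G_1)$ unconditionally, and then supply matching upper bounds by splitting only on whether $G_2$ is asymmetric. This buys three concrete improvements over the paper's three-way case analysis. First, your projection-and-lift argument for $fix(G_1\odot G_2)\geq fix(G_1)$ does not appear in the paper at all: in its Case 2 the paper jumps from the orbit correspondence ``$i\in\mathcal{O}(j)$ in $G_1\odot G_2$ iff $i\in\mathcal{O}(j)$ in $G_1$'' straight to $fix(G_1\odot G_2)=fix(G_1)$, and the missing justification is exactly the two facts you make explicit, namely the degree count forcing centers to map onto centers and the uniqueness of the isomorphism between copies of an asymmetric $G_2$, which is what upgrades $\bar\phi=\mathrm{id}$ to $\phi=\mathrm{id}$. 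Second, your case split is exhaustive, whereas the paper's cases (both asymmetric; only $G_2$ asymmetric; both not asymmetric) omit the case where $G_1$ is asymmetric and $G_2$ is not; your treatment covers it, and your appeal to Theorem \ref{lma5} there is legitimate because its proof uses asymmetry only of $G_2$ (through Lemma \ref{lma4}(1)). Third, your closing caution about $m\geq 2$ is well placed: the degree separation, and indeed the theorem itself, fails for $m=1$ (e.g. $K_1\odot K_2=K_3$ has fixing number $2$, not $\max\{0,1\}=1$); here $m\geq 2$ is supplied by the paper's standing assumption that all graphs are non-trivial, and it is worth stating explicitly as you do.
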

\begin{proof}
\par We have three cases:
\\Case $1$: $G_1$ and $G_2$ are asymmetric.
\\Since $G_1$ and $G_2$ are asymmetric so by Lemma \ref{lma6},
$G_1\odot G_2$ is also asymmetric. Hence, $fix(G_1\odot G_2)=
max\{fix(G_1), m fix(G_2)\}=0$.
 \\Case $2$: Only $G_2$ is asymmetric.
\\Since $G_2$ is asymmetric, so $fix(G_2)=0$. Hence,
$fix(G_1\odot G_2)=
 max\{fix(G_1),$
$m fix(G_2)\}$ = $fix(G_1)$. Now we justify that $fix(G_1\odot G_2)
=fix(G_1)$. Note that the graph induced by $V_{i}$ is $C_{i}$ and
$C_{i}\cong G_{2}$ hence $C_{i}$ is asymmetric for each
$i=1,2,3,\cdots, m$. Since $G_{1}$ is not asymmetric and $C_{i}\cong
C_{j}$ for any $i,j\in V(G_{1})$ so there exists an automorphism
$\alpha \in \Gamma(G_1\odot G_2)$ such that $\alpha(i)=j$ and
$\alpha(V_{i})=V_{j}$. Hence, $i\in \mathcal{O}(j)$ in $G_1\odot
G_2$ if and only if $i\in \mathcal{O}(j)$ in $G_{1}$. Hence,
$fix(G_1 \odot G_2)= fix(G_1)$. Therefore $fix(G_1\odot G_2)$ $=
 max\{fix(G_1),$ $m fix(G_2)\}= fix(G_1)$.
 \\Case $3$: Both $G_1$ and $G_2$ are not asymmetric.
\\Since $fix(G_1)<m$, also $G_2$ is not asymmetric so $fix(G_2)\neq
  0$. Hence $max\{fix(G_1), m fix(G_2)\}$ = $m fix(G_2)$
i.e. we have to show that $fix(G_1\odot G_2)= m fix(G_2)$. By Lemma
\ref{lma5},  $fix(G_1\odot G_2)= m fix(G_2)$. Hence, the result.
\end{proof}

\end{document}